\newtheorem{rmk}{Remark}
\definecolor{Gray}{gray}{0.9}
\newtheorem{proposition}{Proposition}
\title{Fast Explicit Machine Learning-Based Model Predictive Control of Nonlinear Processes Using Input Convex Neural Networks}
\author{Wenlong Wang}
\affiliation[National University of Singapore]
{Department of Chemical and Biomolecular Engineering, National University of Singapore, 117585, Singapore}
\author{Haohao Zhang}
\affiliation[National University of Singapore]
{Department of Chemical and Biomolecular Engineering, National University of Singapore, 117585, Singapore}
\author{Yujia Wang}
\affiliation[National University of Singapore]
{Department of Chemical and Biomolecular Engineering, National University of Singapore, 117585, Singapore}
\author{Yuhe Tian}
\affiliation[West Virginia University]
{Department of Chemical and Biomedical Engineering, West Virginia University, Morgantown, WV 26506, United States}
\author{Zhe Wu}
\affiliation[National University of Singapore]
{Department of Chemical and Biomolecular Engineering, National University of Singapore, 117585, Singapore}
\email{wuzhe@nus.edu.sg}
\begin{document}

\begin{tocentry}
	\centering
	\includegraphics[width=\textwidth]{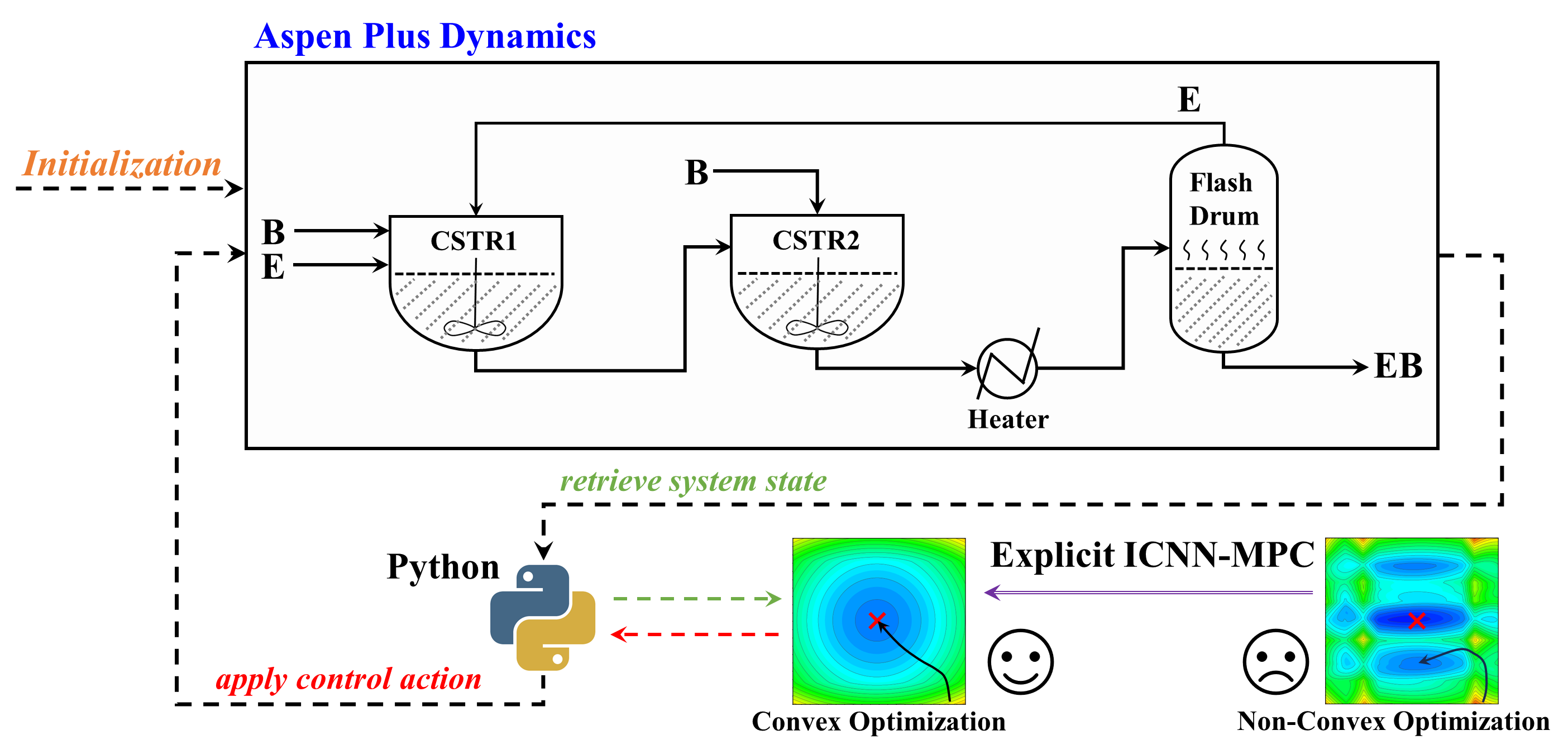}
\end{tocentry}

\begin{abstract}
Explicit machine learning-based model predictive control (explicit ML-MPC) has been developed to reduce the real-time computational demands of traditional ML-MPC. However, the evaluation of candidate control actions in explicit ML-MPC can be time-consuming due to the non-convex nature of machine learning models. To address this issue, we leverage Input Convex Neural Networks (ICNN) to develop explicit ICNN-MPC, which is formulated as a convex optimization problem. Specifically, ICNN is employed to capture nonlinear system dynamics and incorporated into MPC, with sufficient conditions provided to ensure the convexity of ICNN-based MPC. We then formulate mixed-integer quadratic programming (MIQP) problems based on the candidate control actions derived from the solutions of multi-parametric quadratic programming (mpQP) problems within the explicit ML-MPC framework. Optimal control actions are obtained by solving real-time convex MIQP problems. The effectiveness of the proposed method is demonstrated through two case studies, including a chemical reactor example, and a chemical process network simulated by Aspen Plus Dynamics, where explicit ML-MPC written in Python is integrated with Aspen dynamic simulation through a programmable interface. 
\end{abstract}


\section{Introduction} \label{sec:Intro}
Machine learning (ML) models have exhibited great potential in various industrial applications due to their ability to effectively model highly nonlinear processes in chemical plants and petroleum refineries where first-principles models are rarely available \cite{daoutidis2023future,pistikopoulos2024advanced}. Model predictive control (MPC), an advanced process control method, uses a predictive model for processes dynamics to obtain optimal control actions based on state measurements.\cite{morari1999MPC_review1,qin2003MPC_review2,mayne2014MPC_review3}. In recent years, machine learning-based model predictive control (ML-MPC) has been developed for nonlinear processes with data-driven models implemented by ML methods.\cite{wu2019ML-based_MPC_part1,wu2019ML-based_MPC_part2}. Although the use of ML models allows efficient modeling of nonlinear processes, it also poses challenges to the real-time implementation of ML-MPC due to the nonconvexity of ML models and the resulting non-convex ML-based optimization problems.\cite{wang2024explicitML-MPC}.\par

In our previous work, explicit ML-MPC\cite{wang2024explicitML-MPC} has been proposed to mitigate this issue following the idea of explicit MPC in which multi-parametric programming is utilized to convert real-time optimization problems into numerical evaluations\cite{pappas2021mpP_review-3}. A key step in the proposed explicit ML-MPC framework is the linearization of ML models via piecewise linear affine functions such that multi-parametric quadratic programming (mpQP) problems can be formulated for each segment of the discretized state-space. The solutions to mpQP problems provide a convenient and efficient way to obtain the optimal control actions based on real-time state measurements, compared to traditional implicit ML-MPC. However, the evaluation of candidate control actions in explicit ML-MPC can be time-consuming and computationally intensive due to the strong non-convexity introduced by ML models. Although the proposed explicit ML-MPC has managed to reduce the number of candidate control actions by converting the original continuous space to its discretized counterpart, finding the optimal one from these candidates can be quite challenging, as it involves solving a mixed-integer quadratic programming (MIQP) problem. The non-convexity of ML models results in a non-convex MIQP problem, where the global optimality of the solution cannot be guaranteed without employing advanced global optimizers\cite{burer2012MINLPs}.\par

Recent research on the development of convex ML models has made substantial progress. Specifically, Input Convex Neural Network (ICNN) has been proposed to ensure that the neural network output is a convex function with respect to its input\cite{amos2017inputconvex}. The convexity of ICNN is ensured by imposing a non-negative constraint on the learnable weight of each layer and requiring the activation functions that are used after each layer to be convex and non-decreasing. The convexity of ICNN comes at the cost of losing some representation ability. To address this issue, shortcuts that connect the input and hidden layers are implemented in ICNN to partially restore its representation ability. Using ICNN as the predictive model in MPC, ICNN-MPC can be developed as a convex optimization problem due to the convexity of ICNN\cite{chen2018ICNN-MPC_1,yang2021ICNN-MPC_2}. However, it should be noted that the convexity of ICNN does not necessarily lead to a convex objective function in ICNN-MPC, where a quadratic objective function is commonly adopted. Therefore, in this work, ICNN is further improved to ensure a convex objective function in ICNN-MPC.\par

To demonstrate the effectiveness of the proposed explicit ICNN-MPC in controlling chemical processes, Aspen Plus Dynamics, a high-fidelity process modeling and simulation platform in the chemical industry\cite{taqvi2017aspen_modelling}, is used to simulate a chemical process network under explicit ICNN-MPC. Although the joint simulation between Aspen Plus Dynamics and Matlab has been developed for optimization-based process control\cite{zhang2024matlab_aspen},  the co-simulation between Aspen Plus Dynamics and Python is still in its infancy. Specifically, due to the rapid development of ML techniques and ML-MPC\cite{wu2019ML-MPC1,wu2023ML-MPC2,xiao2023ML-MPC3},  Python has been widely used for modeling and control work due to its user-friendly features and many mature libraries, and can serve as a versatile programming language capable of coordinating information from various sources to perform more complex tasks.

 Motivated by the above considerations, in this work, we develop an explicit ICNN-MPC framework that first introduces ICNN into MPC and subsequently formulates convex MIQP problems to improve computational efficiency and obtain global optimal solutions. A dynamic simulation of chemical processes is conducted using Aspen, controlled by the explicit ICNN-MPC implemented in Python, with integration achieved through a programmable interface.
 Specifically, we show the sufficient conditions under which the real-time optimization problems in explicit ICNN-MPC (i.e., MIQP) are convex. A toy example is used to demonstrate the convexity of ICNN and the convexity of the objective function in ICNN-MPC. Then, the formulation of MIQP problems in explicit ICNN-MPC is shown, and different approaches to solving convex MIQP problems are discussed. Finally, two case studies, including a chemical reactor example and a chemical process network simulated by Aspen Plus Dynamics, are presented to demonstrate the effectiveness of the proposed method.

\section{Preliminaries}\label{sec:Preliminaries}
\subsection{ML Models for Nonlinear Dynamic Systems}\label{ML Modelling}
In this work, we focus on a general class of continuous-time nonlinear systems that are described by the following nonlinear first-order ordinary differential equations (ODEs):
\begin{equation}\label{eq:nonlin_sys}
\begin{aligned}
\dot{x}=F(x,u),\;x(t_0)=x_0
\end{aligned}
\end{equation}
\noindent
where $x\in\mathbb{R}^n$ denotes the system state and $u\in\mathbb{R}^m$ represents the manipulated input that is bounded by $\mathbb{U}\coloneqq \left\{ u^{min}_i\leq u_i\leq u^{max}_i,i=1,2,\ldots,m\right\}\subseteq\mathbb{R}^m$. The value of $F$ is assumed to be zero when $x$ and $u$ are both zero, which ensures that the origin $(0,0)$ is a steady state for the nonlinear system of Eq. \ref{eq:nonlin_sys}.\par

For complex and highly nonlinear processes, their first-principles models  represented by Eq. \ref{eq:nonlin_sys} may be difficult to derive, thus hindering the implementation of traditional MPC that uses first-principles models. Nevertheless, the sampled data collected from the dynamic operation of Eq. \ref{eq:nonlin_sys} can be utilized to develop ML models of the following form:
\begin{equation}\label{eq:fml}
\check{x}_{t+1},\check{x}_{t+2},\ldots,\check{x}_{t+k}=F_{ML}(x_t,u_t,u_{t+1},\ldots,u_{t+k-1}),\;k=1,2,3,\ldots
\end{equation}

\noindent where $x_t$ denotes the system state measured at time instant $t$, and $\check{x}_{t+1},\check{x}_{t+2},\ldots,\check{x}_{t+k}$ denote the system states predicted by ML models for time instants 
$t+1,t+2,\ldots,t+k$, respectively. The control actions $u_{t},u_{t+1},\ldots,u_{t+k-1}$ are intended to apply to the system at time instants $t,t+1,\ldots,t+k-1$, respectively. The time interval between two consecutive sampling steps is constant and is defined as the sampling period $\Delta_t$.

\subsection{Explicit Model Predictive Control (MPC)}\label{Explicit MPC}
Explicit MPC has been proposed to accelerate the real-time implementation of MPC by converting online optimization problem to numerical evaluation through multi-parametric programming\cite{dua2008mpP_review-1,pistikopoulos2012mpP_review-2,pappas2021mpP_review-3}. In conventional MPC (i.e., implicit MPC), the optimal control action for each time step is obtained by solving nonlinear optimization problem online, which could be time-consuming and computationally demanding when process dynamics is complex. To address this issue, explicit MPC was developed based on multi-parametric programming that reveals the relationship between the optimal solutions and the variation of parameters in optimization problems\cite{oberdieck2016mpP,tian2021simultaneous}. A general form of nonlinear multi-programming problem is given as follows:
\begin{equation}\label{eq:mpP}
\begin{aligned}
\mathop{\min}_{u}\;&f_{obj}(x,u)\\
\text{s.t. }  &g_{ineq,i}(x,u)\leq 0,\;\forall\;i=1,2,3,\dots\\
              &h_{eq,j}(x,u)=0,\;\forall\;j=1,2,3,\dots\\
              &x\in\mathbb{X}\subseteq\mathbb{R}^n\\
              &u\in\mathbb{U}\subseteq\mathbb{R}^m
\end{aligned}
\end{equation}
\noindent
where $u$ represents the decision variables (e.g., control action), $x$ denotes the uncertain parameters (e.g., system state), and $f_{obj}$ is the nonlinear parametric objective function. $g_{ineq,i}$ and $h_{eq,j}$ are the $i$th inequality constraints and $j$th equality constraints, respectively. Note that $f_{obj}$, $g_{ineq}$, and $h_{eq}$ are required to be twice continuously differentiable in $x$ and $u$ to ensure the validity of Basic Sensitivity Theorem in multi-parametric programming\cite{fiacco1976sensitivity}. The solutions to Eq. \ref{eq:mpP} are in the form of $u^\ast=\Theta_k(x),\;\forall\;x\in\mathbb{X}_k,\;k=1,2,\ldots,N$, where $\mathbb{X}_1\bigcup\mathbb{X}_2\bigcup\dots\bigcup\mathbb{X}_N=\mathbb{X}$, and $N$ is the number of critical regions within the operating region $\mathbb{X}$ we considered. $\Theta_k$ is the $k$th mapping function that explicitly correlates the uncertain parameters $x$ that fall in $\mathbb{X}_k$ to the corresponding optimal solutions $u^{\ast}$. Under explicit MPC, solving real-time nonlinear optimization problems is no longer required for MPC implementation due to the use of these precomputed mapping functions, $\Theta(x)$\cite{pistikopoulos2020book}.

\section{Explicit ICNN-MPC for Nonlinear Processes}
In this section, we develop the explicit ICNN-MPC framework for nonlinear processes. Specifically, the architecture of ICNN is first introduced, followed by a nonlinear numerical example to show the convexity of the resulting optimization problem of ML-MPC due to the use of ICNN. Then, the development of explicit ICNN-MPC is presented, and we show that the evaluation of candidate control actions in explicit ICNN-MPC is reformulated to a convex MIQP problem that can be solved efficiently by various convex optimization algorithms. In this work, the variables in boldface represent matrices, and the non-bold variables represent vectors and scalars.

\subsection{Input Convex Neural Network (ICNN)}\label{ICNN}
 Input Convex Neural Network was originally proposed by Amos et al. to construct a neural network whose output is a convex function of the input\cite{amos2017inputconvex}. The model architecture of ICNN, as shown in Fig. \ref{fig:1}, is elaborately designed to exhibit convexity without significantly compromising its representational capacity. The forward propagation in ICNN is given by the following equation:
\begin{equation}\label{eq:icnn}
z_{i+1}=\sigma_i\left(\boldsymbol{W_{z,i}}z_i+\boldsymbol{W_{s,i}}s+b_i\right),\;i=0,1,\ldots,k-1
\end{equation}
where $\boldsymbol{W_{z,i}}$ and $\boldsymbol{W_{s,i}}$ are the $i$th weight terms associated with hidden state $z_i$ and input $s$, respectively. $b_i$ refers to the $i$th bias term, and $\sigma_i$ denotes the activation function of the $i$th hidden layer. To ensure that the output $z_k$ is a convex function of the input $s$, the weight terms $\boldsymbol{W_{z,0}},\boldsymbol{W_{z,1}},\ldots,\boldsymbol{W_{z,k-1}}$ are designed to be non-negative. In this work, this is achieved by changing the negative parameters in these weight terms to zero after each optimizer step, using the Pytorch framework. For TensorFlow users, this non-negative constraint on the trainable weights can be enforced by explicitly setting keyword argument ``constraint=tf.keras.constraints.NonNeg()'' during the training process\cite{wang2023ICLSTM}. Additionally, the activation functions $\sigma_{0},\sigma_{1},\ldots,\sigma_{k-1}$ are required to be convex and non-decreasing. All the elements in $z_0$ and $\boldsymbol{W_{z,0}}$ are initialized as zero. There are no constraints on the weights of the shortcuts (i.e., the dash lines in Fig. \ref{fig:1}) that connect input and hidden layers since adding linear terms to a convex function does not violate its convexity. The constraints on activation functions can be satisfied by some commonly used activation functions, for example, rectified unit (ReLU), leaky rectified unit, softplus, and max-pooling unit.\par
\begin{figure}[h]
\includegraphics[width=\textwidth]{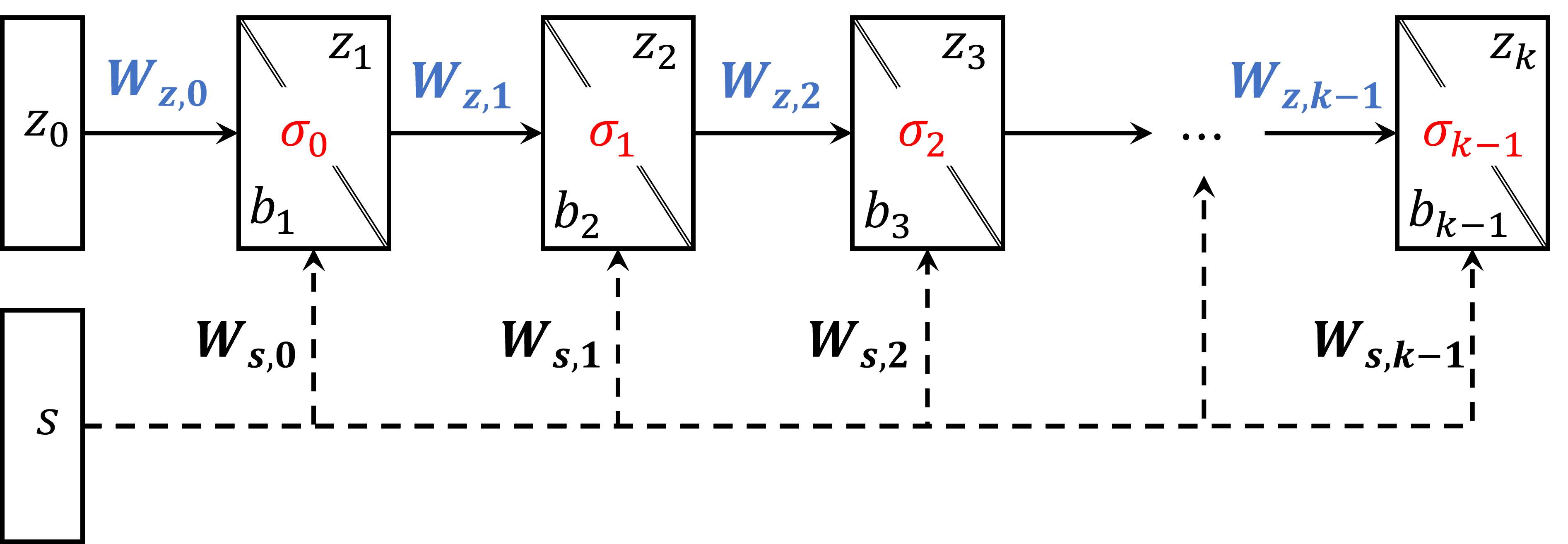}
\centering
\caption{Model architecture of ICNN.}
\label{fig:1}
\end{figure}

\subsection{ICNN-MPC}\label{ICNN-MPC}
 ICNN can be readily incorporated into MPC following the general formulation of ML-MPC where ICNN is used as the prediction model to capture the dynamics of nonlinear systems. However, the convexity of ICNN does not necessarily lead to a convex objective function in ICNN-MPC for any quadratic objective function. We first show the sufficient conditions for obtaining a convex optimization problem of ICNN-MPC. The design of ICNN-MPC is given by the following optimization problem:
\begin{equation}\label{eq:ICNN-MPC}
\begin{aligned}
\mathcal{J}=\mathop{\min}_{u_{t|t},\ldots,u_{t+N_p-1|t}}\;&\sum\limits_{k=1}^{N_p}\bar{x}^\top_{t+k|t}\boldsymbol{M}\bar{x}_{t+k|t}+\sum\limits_{k=0}^{N_p-1}u^\top_{t+k|t}\boldsymbol{N}u_{t+k|t}\\
\text{s.t. }~~
        &\bar{x}_{t+k|t}=F_{ICNN,k}(x_{t|t},u_{t|t},\ldots,u_{t+k-1|t}),\;k=1,2,\ldots,N_p\\
        &x_{t|t}=x(t)\\
        &x\in\mathbb{X}\subseteq\mathbb{R}^n\\
        &u\in\mathbb{U}\subseteq\mathbb{R}^m
\end{aligned}
\end{equation}
where $t+k|t$ denotes the prediction made for the instant $t+k$ based on the measurement at the instant $t$, and $N_p$ represents the length of prediction horizon. $\bar{x}_{t+k|t}$ denotes the absolute value of the predicted system state for time instant $t+k$ and $F_{ICNN,k}$ is the $k$th  ICNN model that predicts $\bar{x}_{t+k|t}$ by using $x_{t|t},u_{t|t},\ldots,u_{t+k-1|t}$. Note that to implement ICNN-MPC and ensure convexity, we need to train $N_p$ ICNN models that predict the state at $t=t+1$, $t=t+2$,..,$t=t+N_p$, respectively; however, they can be trained concurrently using parallel computing to save time. 
$\boldsymbol{M}$ and $\boldsymbol{N}$ are weight matrices for the state and control action, respectively. $x_{t|t}$ is the state measured at time instant $t$. $\mathbb{X}$ and $\mathbb{U}$ are assumed to be convex sets.  The following proposition establishes the sufficient conditions that ensure the convexity of the optimization problem of ICNN-MPC.
\begin{proposition}
    Consider the ICNN-MPC of Eq. \ref{eq:ICNN-MPC} using the ICNN of Eq. \ref{eq:icnn}  to predict the system dynamics.   Eq. \ref{eq:ICNN-MPC} is a convex optimization problem if the following three conditions are satisfied: (1) the output of ICNN is non-negative; (2) $\boldsymbol{M}$ is a diagonal matrix whose diagonal elements are all positive; (3) all the constraints are convex.
\end{proposition}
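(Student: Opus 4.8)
The plan is to fix the measured state $x(t)$ as a parameter and show that the cost $\mathcal{J}$ in Eq.~\ref{eq:ICNN-MPC} is a convex function of the stacked decision vector $U\coloneqq(u_{t|t}^\top,\ldots,u_{t+N_p-1|t}^\top)^\top$, while the constraint set is convex; then Eq.~\ref{eq:ICNN-MPC} is by definition a convex program. First I would record the building block that makes ICNN relevant: by the architecture of Eq.~\ref{eq:icnn} (non-negative $\boldsymbol{W_{z,i}}$, convex and non-decreasing $\sigma_i$), every component of each prediction map $F_{ICNN,k}$ is a convex function of the full ICNN input $(x_{t|t},u_{t|t},\ldots,u_{t+k-1|t})$. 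Since composing a convex function with an affine map preserves convexity — here, freezing the coordinates associated with $x_{t|t}=x(t)$ — each scalar predicted state $\bar{x}_{t+k|t,i}$ is convex in $U$.

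The crux of the argument, where all three hypotheses enter, is the state-penalty term $\sum_{k=1}^{N_p}\bar{x}_{t+k|t}^\top\boldsymbol{M}\,\bar{x}_{t+k|t}$. Because $\boldsymbol{M}$ is diagonal with entries $M_{ii}>0$ (condition~(2)), this term equals $\sum_{k}\sum_i M_{ii}\,\bar{x}_{t+k|t,i}^2$, a non-negative linear combination of squared predicted states. Here I would invoke the elementary fact that the square of a \emph{non-negative} convex function is convex: if $f\ge 0$ is convex, then for $\lambda\in[0,1]$ we have $0\le f(\lambda a+(1-\lambda)b)\le\lambda f(a)+(1-\lambda)f(b)$, and squaring — monotone on $[0,\infty)$ — together with convexity of $y\mapsto y^2$ yields $f(\lambda a+(1-\lambda)b)^2\le\lambda f(a)^2+(1-\lambda)f(b)^2$; equivalently $f^2=h\circ f$ with $h(y)=(\max\{y,0\})^2$ convex and non-decreasing. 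This is exactly where condition~(1), non-negativity of the ICNN output, is indispensable: for a convex $f$ that changes sign, $f^2$ need not be convex. Hence each $\bar{x}_{t+k|t,i}^2$ is convex in $U$, and so is the entire state-penalty term.

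Then I would dispatch the remaining pieces quickly. The control-penalty term $\sum_{k=0}^{N_p-1}u_{t+k|t}^\top\boldsymbol{N}\,u_{t+k|t}$ is a quadratic form in $U$ with block-diagonal Hessian $2\,\mathrm{diag}(\boldsymbol{N},\ldots,\boldsymbol{N})$, hence convex under the standard choice $\boldsymbol{N}\succeq 0$; adding it to the convex state-penalty term keeps $\mathcal{J}$ convex since sums of convex functions are convex. For the feasible set, the equality constraints $\bar{x}_{t+k|t}=F_{ICNN,k}(\cdot)$ merely define auxiliary variables and may be eliminated by substitution, $x_{t|t}=x(t)$ fixes a parameter, and $\mathbb{X}$, $\mathbb{U}$ are assumed convex; together with condition~(3), the feasible region is convex. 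Minimizing a convex function over a convex set, Eq.~\ref{eq:ICNN-MPC} is a convex optimization problem.

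I expect the only non-routine point to be the composition step in the second paragraph — making explicit that convexity is \emph{not} preserved under squaring without the sign condition, so that hypothesis~(1) is genuinely load-bearing — alongside the easy but worth-stating observation that ICNN convexity with respect to its entire input vector descends to convexity in the control subvector once the measured state is frozen. Everything else is bookkeeping: diagonality of $\boldsymbol{M}$ turns the quadratic form into a non-negative combination of these squares, and positivity of its diagonal keeps that combination convexity-preserving.
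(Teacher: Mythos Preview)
Your argument is correct and structurally matches the paper's: both decompose the state penalty via the diagonality of $\boldsymbol{M}$ into a non-negative combination of squared scalar ICNN outputs and then argue that the square of a non-negative convex function is convex. The substantive difference is \emph{how} that last fact is established. The paper computes the Hessian $\boldsymbol{H}(g_i)=2M_i\bigl[\nabla f_i\nabla f_i^\top+f_i\,\boldsymbol{H}(f_i)\bigr]$ and checks positive semi-definiteness term by term, which tacitly assumes $f_i\in C^2$; you instead use the elementary composition rule $f^2=h\circ f$ with $h(y)=(\max\{y,0\})^2$ convex and non-decreasing (equivalently, the Jensen-plus-monotonicity inequality you wrote out). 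Your route is a bit more robust, since ReLU-based ICNNs are only piecewise linear and the Hessian is undefined on a measure-zero set, so the second-order argument in the paper needs a weak-derivative interpretation that is never spelled out. You are also more explicit than the paper about two bookkeeping points the paper leaves implicit: that freezing $x_{t|t}$ is an affine restriction preserving convexity in $U$, and that the control penalty requires $\boldsymbol{N}\succeq 0$.
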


\begin{proof}
The third condition is trivial given that $\mathbb{X}$ and $\mathbb{U}$ in Eq. \ref{eq:ICNN-MPC} are designed to be convex sets. Therefore, we present the proof for the first and second conditions. We first show that the square of a non-negative convex function still reserves convexity. Let $f : \mathbb{R}^{n+m\times N_p} \to \mathbb{R}^n$ be a convex function learned by an ICNN model with $x=f(s)\geq0$ where $x$ and $s$ are vectors of appropriate dimensions. Let $g : \mathbb{R}^{n} \to \mathbb{R}$ be a quadratic function in ICNN-MPC such that $g(x)= x^\top\boldsymbol{M}x$ where $\boldsymbol{M}$ is a diagonal matrix with all its diagonal elements $M_{1},M_{2},\ldots,M_{n}$ positive. Then, $g(s)=f(s)^\top\boldsymbol{M}f(s)$ can be reformulated as the sum of $n$ terms: $g(s)=g_1(s)+g_2(s)+\ldots+g_n(s)=M_1[f_1(s)]^2+M_2[f_2(s)]^2+\ldots+M_n[f_n(s)]^2$ where $f_1,f_2,\ldots,f_n$ are independent components of $f$. Each component of $f$ is a function of $\mathbb{R}^{n+m\times N_p} \to \mathbb{R}$. The Hessian matrix for the $i$th term of $g(s)$ is $\boldsymbol{H}(g_i(s))=2M_i\left[\nabla_s f_i(s)\nabla_s f_i(s)^\top+f_i(s)\boldsymbol{H}(f_i(s))\right]$, where $\nabla_s f_i(s)$ denotes the gradient of the $i$th component of $f$ with respect to $s$. $\boldsymbol{H}(g_i(s))$ is positive semi-definite due to the fact that $M_i$ is positive, $f_i(s)$ is non-negative, and $\nabla_s f_i(s)\nabla_s f_i(s)^\top$ and $\boldsymbol{H}(f_i(s))$   are both positive semi-definite, which makes $g_i(s)$ a convex function with respect to $s$. Subsequently, $g(s)=g_1(s)+g_2(s)+\ldots+g_n(s)$ exhibits convexity because the summation of convex functions is still a convex function.

The non-negative constraint on the output of ICNN restricts the fitting ability of ICNN as system state may take negative values in state-space. To address this issue, we improve the design of ICNN to predict the absolute value of the state. Specifically, sampled data obtained from computer simulations or experiments is first mapped to their absolute counterpart. Activation functions that eliminate negative numbers (e.g., ReLU) are chosen for $\sigma_{k-1}$. This strategy will invalidate the recursive use of ICNN when the prediction horizon $N_p$ is larger than one, as ICNN requires the exact value of state (i.e., not its absolute counterpart) as input. This problem can be solved by training $N_p$ ML models for each predicted system states at $t+1, t+2,\ldots, t+N_p$, respectively, using the the ML model shown in Section ``ML Models for Nonlinear Dynamic Systems''. Specifically, when $\boldsymbol{M}$ is a diagonal matrix with all its diagonal elements $M_{1},M_{2},\ldots,M_{n}$ positive as mentioned above, then equation $\bar{x}^\top_{t+k|t}\boldsymbol{M}\bar{x}_{t+k|t}=x^\top_{t+k|t}\boldsymbol{M}x_{t+k|t}$ holds for all values of $k$ shown in Eq. \ref{eq:ICNN-MPC}. The is true because $\bar{x}^\top_{t+k|t}\boldsymbol{M}\bar{x}_{t+k|t}=\sum\limits_{i=1}^{n}M_{i}\bar{x}^2_{i,t+k|t}=\sum\limits_{i=1}^{n}M_{i}x^2_{i,t+k|t}=x^\top_{t+k|t}\boldsymbol{M}x_{t+k|t}$, where $\bar{x}_{i,t+k|t}$ and $x_{i,t+k|t}$ are the $i$th component of $\bar{x}_{t+k|t}$ and $x_{t+k|t}$, respectively. The matrix $\boldsymbol{M}$ designed in this manner ensures that the values of the objective function remain unaffected despite the non-negative output constraint imposed on the ICNN. Additionally, it ensures that the value of objective function equals zero only at the origin, and is greater than zero for all other states (i.e., $x^\top\boldsymbol{M}x>0,\;\forall\;x\in\mathbb{X}\;\backslash\;\{\mathbf{0}\}$). Therefore, if the three conditions in the theorem statement are met (i.e., an ICNN model is designed with non-negative outputs, the weight matrix $\boldsymbol{M}$ for system state in the ICNN-MPC is a diagonal matrix with all its elements positive, and convex constraint sets), then the ICNN-MPC of Eq. \ref{eq:ICNN-MPC} is a convex optimization problem.\hfill$\blacksquare$ 
\end{proof}

To better demonstrate the effectiveness of using the ICNN-MPC of Eq. \ref{eq:ICNN-MPC} to obtain a convex approximation of non-convex functions, the design of ICNN-MPC for a numerical example is presented. The system dynamics of this toy example is described by the following nonlinear equations:
\begin{equation}\label{eq:small-demo}
\begin{aligned}
&x_{1,t+1}=0.5x^2_{1,t}-x_{2,t}+\sin{u_{1,t}}-\cos{u_{2,t}}\\
&x_{2,t+1}=-x_{1,t}+0.5x^2_{2,t}-\cos{u_{1,t}}+\sin{u_{2,t}}
\end{aligned}
\end{equation}
where $x_{t}=[x_{1,t},x_{2,t}]^\top$ is  the  state vector and $u_{t}=[u_{1,t},u_{2,t}]^\top$ is   the manipulated input vector. An ICNN model $F^\ast_{ICNN}$ that uses $x_{t|t}$ and $u_{t|t}$ to predict $\bar{x}_{t+1|t}$ is developed to capture the dynamics of the nonlinear system. The length of the prediction horizon is set to 1 in order to visualize the objective function. The   corresponding ICNN-MPC is given by the following convex optimization problem:
\begin{equation}\label{eq:ICNN-MPC-demo}
\begin{aligned}
\mathcal{J}=\mathop{\min}_{u_{t|t}}\;&\bar{x}^\top_{t+1|t}\boldsymbol{M}\bar{x}_{t+1|t}+u^\top_{t|t}\boldsymbol{N}u_{t|t}\\
\text{s.t. }~~
        &\bar{x}_{t+1|t}=F^\ast_{ICNN}(x_{t|t},u_{t|t})\\
        &x_{t|t}=x(t)\\
        &-10\leq u_{t|t}\leq 10
\end{aligned}
\end{equation}
where $\boldsymbol{M}=[1\;0;0\;1]$ and $\boldsymbol{N}=[0.1\;0;0\;0.1]$. We first conduct extensive open-loop simulations based on Eq. \ref{eq:small-demo} to generate the training data for the ICNN model $F^\ast_{ICNN}$ to capture the nonlinear dynamics. Note that the sample-label pair is in the form of \{$(x_{t|t},u_{t|t})$,\;$\bar{x}_{t+1|t}$\} where $\bar{x}_{t+1|t}$ is the absolute value of $x_{t+1|t}$. Then, an ICNN model is trained to predict $\bar{x}_{t+1|t}$ by utilizing $x_{t|t}$ and $u_{t|t}$ as inputs. For comparison purposes, another ICNN model and a feedforward neural network (FNN) model, both trained on \{$(x_{t|t},u_{t|t})$,\;$x_{t+1|t}$\} (i.e., using the true value of $x_{t+1|t}$ as outputs), are also developed. To demonstrate the convexity of the ICNN model, the prediction profiles given by the ICNN model with absolute output values and the FNN model for the first component of $x_{t+1|t}$ with respect to $u_{t|t}$ for $x_{t|t}=[1.5,-1]$ are shown in Fig. \ref{fig:2s}. The profiles of the objective function with respect to $u_{t|t}$ for $x_{t|t}=[1.5,-1]$ are shown in Fig. \ref{fig:2}.\par
\begin{figure}[h]
\centering
\begin{subfigure}{0.495\textwidth}
    \includegraphics[width=\textwidth]{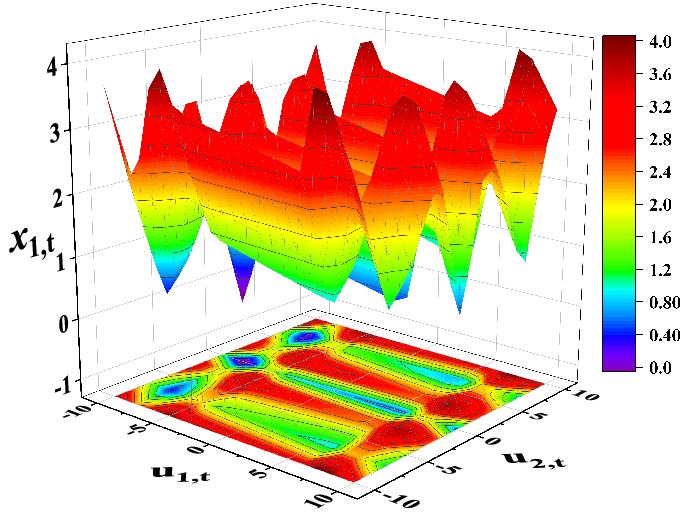}
    \caption{System dynamics profiles described by an FNN model.}
    \label{fig:2s_a}
\end{subfigure}
\hfill
\begin{subfigure}{0.495\textwidth}
    \includegraphics[width=\textwidth]{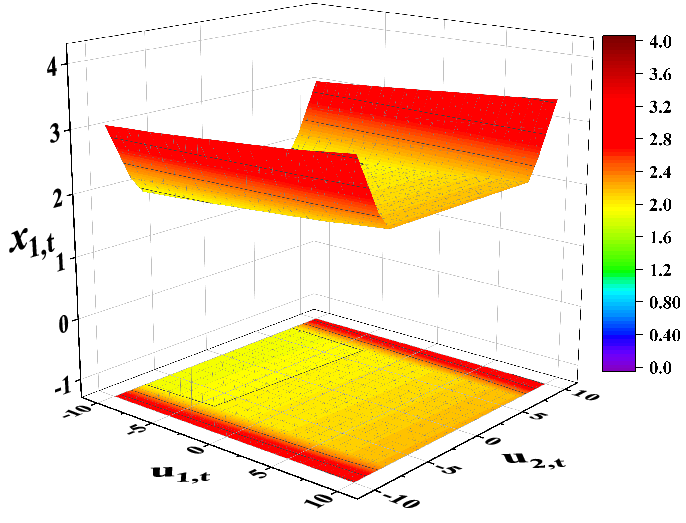}
    \caption{System dynamics profiles described an ICNN model.}
    \label{fig:2s_b}
\end{subfigure}
\hfill
\caption{System dynamics profiles described by an FNN model and an ICNN model.}
\label{fig:2s}
\end{figure}

\begin{figure}[h]
\centering
\begin{subfigure}{0.495\textwidth}
    \includegraphics[width=\textwidth]{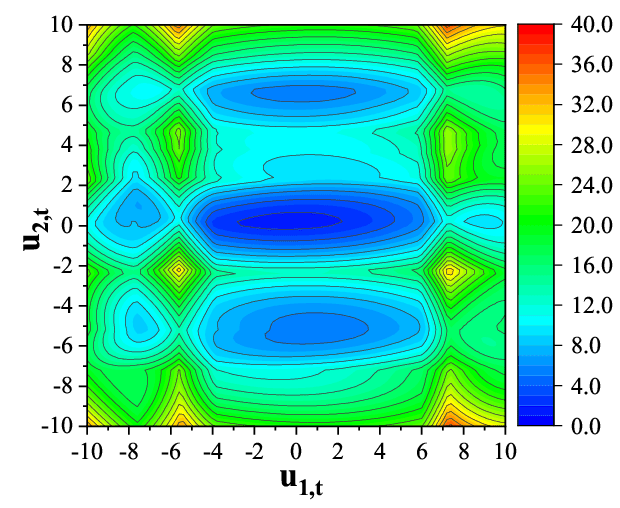}
    \caption{Profiles of the objective function obtained by the FNN model that is trained without non-negative output constraint.}
    \label{fig:2_a}
\end{subfigure}
\hfill
\begin{subfigure}{0.495\textwidth}
    \includegraphics[width=\textwidth]{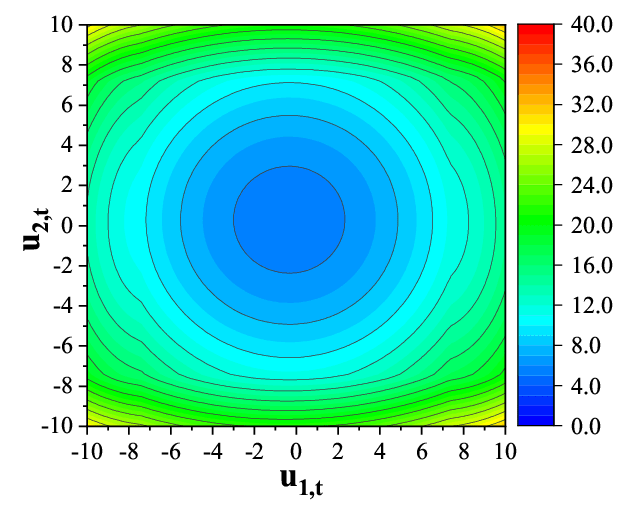}
    \caption{Profiles of the objective function obtained by the ICNN model that is trained without non-negative output constraint.}
    \label{fig:2_b}
\end{subfigure}
\hfill
\newline
\begin{subfigure}{0.495\textwidth}
    \includegraphics[width=\textwidth]{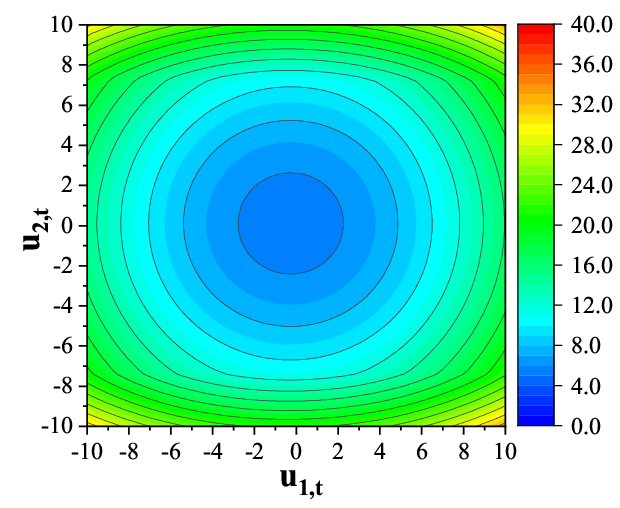}
    \caption{Profiles of the objective function obtained by the ICNN model that is trained with non-negative output constraint.}
    \label{fig:2_c}
\end{subfigure}
\caption{Profiles of the objective functions under three different training schemes.}
\label{fig:2}
\end{figure}
It is shown in Fig. \ref{fig:2s} that the ICNN model is able to learn a convex function that approximates the system dynamics, while the FNN model captures the nonlinearity using a non-convex function. The objective function shown in Fig. \ref{fig:2_a} also indicates that the FNN model exhibits strong non-convexity since several local minimums are located around the global minimum, making it hard to reach the global optimum without being trapped in those local minimums. For the ICNN model that is trained without the non-negative output constraint, the objective function shown in Fig. \ref{fig:2_b} also fails to be convex due to the presence of inward-bending dents. However, the ICNN model trained under the non-negative output constraint leads to a convex objective function, as shown in Fig. \ref{fig:2_c}, which allows efficient implementation of convex optimization algorithms in finding the global optimum.
\begin{rmk}
    It is important to note that while ICNN models offer benefits such as global optimality and stability, they may lose accuracy when applied to highly non-convex functions due to their inherent convexity. However, for processes that are nonlinear yet not highly non-convex, previous studies have shown that strategies such as McCormick envelope-based convex relaxation can obtain better solutions in less computational time for Refinery Operations Planning Problem~\cite{andrade2016oil-convex}. In this context, ICNN models can also serve the purpose of convex relaxation, providing a computationally efficient alternative for ML-based optimization problems while preserving accuracy. Furthermore, it is shown in Ref. \citenum{amos2017inputconvex} that the partially input convex architecture of ICNNs (PICNN) can improve the representability of ICNN models  by ensuring that the output remains a convex function with respect to certain input elements. However, it is acknowledged that ICNNs may lose accuracy for real-world systems with highly nonlinear and non-convex input-output relationships. Therefore, developing ICNN models requires a delicate balance between convexity and representation power to ensure optimal performance for various applications.
\end{rmk}

\subsection{Explicit ICNN-MPC}\label{Explicit ICNN-MPC}
In our previous work, an explicit ML-MPC framework for a general class of ML models was proposed\cite{wang2024explicitML-MPC}. Specifically, given a nonlinear ML model, a self-adaptive approximation algorithm is first applied to obtain a number of piecewise linear affine functions that approximate the nonlinear behavior of ML model with a sufficiently small error. This approximation approach leads to discretization of the state-space. As the nonlinearity of the ML model is explicitly expressed by linear affine functions at each segment, the original mpNLP problem in the design of explicit ML-MPC is converted to a number of mpQP problems that can be reliably solved.

The solution to each mpQP problem indicates the best control action within the scope of that mpQP problem, as the state-space has been discretized. To develop explicit ICNN-MPC, one can follow the idea of explicit ML-MPC and the design of ICNN-MPC given by Eq. \ref{eq:ICNN-MPC}. Specifically, the design of explicit ICNN-MPC for the $i$th region of the discretized state-space (i.e., $\Omega_i$) is given by the following mpQP problem:
\begin{equation}\label{eq:mpqp}
\begin{aligned}
\mathcal{J}_i=\mathop{\min}_{u_{t|t},\ldots,u_{t+N_p-1|t}}\;&\sum\limits_{k=1}^{N_p}\bar{x}^\top_{t+k|t}\boldsymbol{M}\bar{x}_{t+k|t}+\sum\limits_{k=0}^{N_p-1}u^\top_{t+k|t}\boldsymbol{N}u_{t+k|t}\\
\text{s.t. }~~
    &\bar{x}_{t+k|t}=\boldsymbol{\overline{W}_{i,k}}x_{t|t}+\sum\limits_{j=1}^{k}\boldsymbol{\widehat{W}_{i,k,j}}u_{t+j-1|t}+\widetilde{W}_{i,k},\;k=1,2,\ldots,N_p\\
    &x_{t|t}\in\Omega_{i,x,0}\subseteq\mathbb{R}^n\\
    &u_{t+k|t}\in\Omega_{i,u,k}\subseteq\mathbb{R}^m,\;k=0,1,\ldots,N_p-1\\
\end{aligned}
\end{equation}
where $\boldsymbol{\overline{W}_{i,k}}$ and $\widetilde{W}_{i,k}$ are the coefficients for $x_{t|t}$ and constant term, respectively. $\boldsymbol{\widehat{W}_{i,k,j}}$ is the coefficient associated with the $(j-1)$th control action $u_{t+j-1|t}$ for the prediction of $\bar{x}_{t+k|t}$. 
The region $\Omega_i$ is an orthogonal polyhedral space of $n+N_p\times m$ dimensions consisting of $\Omega_{i,x,0},\Omega_{i,u,0},\Omega_{i,u,1},\ldots,\Omega_{i,u,N_p-1}$ given that the prediction horizon is $N_p$. For $x_{t|t}$ and $u_{t|t},u_{t+1|t},\ldots,u_{t+N_p-1|t}$ that fall into the region $\Omega_i$, the corresponding coefficients $\boldsymbol{\overline{W}_{i}}=\left[\boldsymbol{\overline{W}_{i,1}},\boldsymbol{\overline{W}_{i,2}},\ldots,\boldsymbol{\overline{W}_{i,N_p}}\right]$, $\boldsymbol{\widehat{W}_{i}}=\left[ \boldsymbol{\widehat{W}_{i,1,1}},\boldsymbol{\widehat{W}_{i,2,1}}\;\boldsymbol{\widehat{W}_{i,2,2}},\ldots,\boldsymbol{\widehat{W}_{i,N_p,1}}\;\boldsymbol{\widehat{W}_{i,N_p,2}}\;\ldots\;\boldsymbol{\widehat{W}_{i,N_p,N_p}}\right]$, and $\widetilde{W}_{i}=\left[\widetilde{W}_{i,1},\widetilde{W}_{i,2},\ldots,\widetilde{W}_{i,N_p}\right]$ used for the approximation of the nonlinear system are obtained with the desired approximation accuracy, and they are determined through the linearization of the ICNN models.\par

The number of regions can be quantitatively evaluated based on the setting of the discretization process. If the original state-space $\Omega_0$ (i.e., the non-discrete one) associated with a prediction horizon of $N_p$ is scaled to the range of $[a,b]$ for each variable using min-max normalization and the minimum length of the segment is set to $s$, then the maximum number of regions $\hat{N}_{\Omega}$ can be obtained is $(\frac{b-a}{s})^{n+m\times N_p}$. This value increases exponentially as $n$, $m$, and $N_p$ increase. One potential solution to the curse of dimensionality is to reduce the order of states using feature selection or extraction techniques (similar to reduced-order modeling work, as shown in Ref. \citenum{zhao2022machine} and Ref. \citenum{zhao2023feature}) and then develop explicit ICNN-MPC based on reduced-order states; in this case, $\hat{N}_{\Omega}$ will be reduced by adopting a smaller value of $n$.\par

Eq. \ref{eq:mpqp} can be further reformulated as follows:
\begin{equation}\label{eq:mpQP-obj2}
\begin{aligned}
\mathcal{J}_i=\mathop{\min}_{U_{N_p|t}}\;&U^\top_{N_p|t}\boldsymbol{M_{i,1}}U_{N_p|t}+x^\top_{t|t}\boldsymbol{M_{i,2}}x_{t|t}+x^\top_{t|t}\boldsymbol{M_{i,3}}U_{N_p|t}+M_{i,4}U_{N_p|t}+M_{i,5}x_{t|t}+M_{i,6}\\
\text{s.t. }~~
    &\boldsymbol{A_{i}}x_{t|t}\leq b_i\\
    &\boldsymbol{C_{i}}U_{N_p|t}\leq d_i+\boldsymbol{F_{i}}x_{t|t}
\end{aligned}
\end{equation}
where $U_{N_p|t}=[u_{t|t},u_{t+1|t},\ldots,u_{t+N_p-1|t}]^\top$ is the stacked vector for manipulated input and
\begin{equation*}
\begin{aligned}
\boldsymbol{M_{i,1}}&=
\begin{bmatrix}
\sum\limits_{k=1}^{N_p}\boldsymbol{\widehat{W}^\top_{i,k,1}}\boldsymbol{M}\boldsymbol{\widehat{W}_{i,k,1}}+\boldsymbol{N} & \sum\limits_{k=2}^{N_p}\boldsymbol{\widehat{W}^\top_{i,k,1}}\boldsymbol{M}\boldsymbol{\widehat{W}_{i,k,2}} & \cdots & \boldsymbol{\widehat{W}^\top_{i,N_p,1}}\boldsymbol{M}\boldsymbol{\widehat{W}_{i,N_p,N_p}}\\
\sum\limits_{k=2}^{N_p}\boldsymbol{\widehat{W}^\top_{i,k,2}}\boldsymbol{M}\boldsymbol{\widehat{W}_{i,k,1}} & \sum\limits_{k=2}^{N_p}\boldsymbol{\widehat{W}^\top_{i,k,2}}\boldsymbol{M}\boldsymbol{\widehat{W}_{i,k,2}}+\boldsymbol{N} & \cdots & \boldsymbol{\widehat{W}^\top_{i,N_p,2}}\boldsymbol{M}\boldsymbol{\widehat{W}_{i,N_p,N_p}}\\
\vdots & \vdots & \ddots & \vdots\\
\boldsymbol{\widehat{W}^\top_{i,N_p,N_p}}\boldsymbol{M}\boldsymbol{\widehat{W}_{i,N_p,1}} & \boldsymbol{\widehat{W}^\top_{i,N_p,N_p}}\boldsymbol{M}\boldsymbol{\widehat{W}_{i,N_p,2}} & \cdots & \boldsymbol{\widehat{W}^\top_{i,N_p,N_p}}\boldsymbol{M}\boldsymbol{\widehat{W}_{i,N_p,N_p}}+\boldsymbol{N}\\
\end{bmatrix}\\
\boldsymbol{M_{i,2}}&=\sum\limits_{k=1}^{N_p}\boldsymbol{\overline{W}^\top_{i,k}}\boldsymbol{M}\boldsymbol{\overline{W}_{i,k}}\\
\boldsymbol{M_{i,3}}&=2\times\begin{bmatrix}
     \sum\limits_{k=1}^{N_p}\boldsymbol{\overline{W}^\top_{i,k}}\boldsymbol{M}\boldsymbol{\widehat{W}_{i,k,1}}&\sum\limits_{k=2}^{N_p}\boldsymbol{\overline{W}^\top_{i,k}}\boldsymbol{M}\boldsymbol{\widehat{W}_{i,k,2}}&\cdots&\boldsymbol{\overline{W}^\top_{i,N_p}}\boldsymbol{M}\boldsymbol{\widehat{W}_{i,N_p,N_p}}\end{bmatrix}\\
M_{i,4}&=2\times\begin{bmatrix}
     \sum\limits_{k=1}^{N_p}\widetilde{W}^\top_{i,k}\boldsymbol{M}\boldsymbol{\widehat{W}_{i,k,1}}&\sum\limits_{k=2}^{N_p}\widetilde{W}^\top_{i,k}\boldsymbol{M}\boldsymbol{\widehat{W}_{i,k,2}}&\cdots&\widetilde{W}^\top_{i,N_p}\boldsymbol{M}\boldsymbol{\widehat{W}_{i,N_p,N_p}}\end{bmatrix}\\
M_{i,5}&=2\times\sum\limits_{k=1}^{N_p}\widetilde{W}^\top_{i,k}\boldsymbol{M}\boldsymbol{\overline{W}_{i,k}}\\
M_{i,6}&=\sum\limits_{k=1}^{N_p}\widetilde{W}^\top_{i,k}\boldsymbol{M}\widetilde{W}_{i,k}
\end{aligned}
\end{equation*}
are the coefficients required for constructing the objective function of a standard mpQP problem. The coefficients $\boldsymbol{A_i}$, $\boldsymbol{C_i}$, $\boldsymbol{F_i}$, $b_i$, and $d_i$ are obtained by performing linear transformations for constraints in Eq. \ref{eq:mpqp}, which is trivial and is not shown here. The solutions to Eq. \ref{eq:mpQP-obj2} can be efficiently obtained using Python Parametric OPtimization Toolbox (PPOPT)\cite{kenefake2022PPOPT}. Under the proposed explicit ICNN-MPC framework, finding the best one from the candidate control actions given by all the mpQP problems is equivalent to solving the following convex MIQP problem. However, it should be noted that under the explicit ML-MPC without using ICNN as the predictive model, solving the above problem does not necessarily lead to a convex optimization problem, and therefore,  global optimizers or exhaustive search have to be applied to find the globally optimal solution.
\begin{equation}\label{eq:MIQP}
\begin{aligned}
\mathcal{J}=\mathop{\min}_{u^\ast_{t|t},\ldots,u^\ast_{t+N_p-1|t}}\;&\sum\limits_{k=1}^{N_p}\bar{x}^\top_{t+k|t}\boldsymbol{M}\bar{x}_{t+k|t}+\sum\limits_{k=0}^{N_p-1}u^{\ast\top}_{t+k|t}\boldsymbol{N}u^{\ast}_{t+k|t}\\
\text{s.t. }~~
        \bar{x}_{t+k|t,i}&=F_{ICNN,k}(x_{t|t},u^{\ast}_{t|t,i},\ldots,u^{\ast}_{t+k-1|t,i}),\;i=1,2,\ldots,N_q,\;k=1,2,\ldots,N_p\\
        \bar{x}_{t+k|t}&=\sum^{N_q}_{i=1}\xi_i\bar{x}_{t+k|t,i},\;k=1,2,\ldots,N_p\\
        u^{\ast}_{t+k|t}&=\sum^{N_q}_{i=1}\xi_{i}u^{\ast}_{t+k|t,i},\;k=0,1,\ldots,N_p-1\\
        \sum^{N_q}_{i=1}\xi_i&=1\;,\xi_i\in\{0,1\}
\end{aligned}
\end{equation}
where $u^{\ast}_{t|t,i},u^{\ast}_{t+1|t,i},\ldots,u^{\ast}_{t+N_p-1|t,i}$ are the optimal control actions given by the $i$th mpQP problem with a prediction horizon of $N_p$. $\bar{x}_{t+k|t,i}$ is the absolute value predicted by the $k$th ICNN model trained for predicting the state at time instant $t+k$ using the control actions given by the $i$th mpQP problem. It should be noted that $F_{ICNN,k}$ in Eq. \ref{eq:MIQP} is the original nonlinear convex one because evaluating the objective function along its original surface rather than the approximated surface given by the piecewise linear affine functions can provide better accuracy. $N_q$ is the total number of mpQP problems obtained for the measured $x_{t|t}$. As the entire state-space (i.e., $x_{t},u_{t}$, $u_{t+1},\ldots,u_{t+N_p-1}$) is discretized, the control actions given by the mpQP problems are optimal only within their own domains. The value of $N_q$ can be evaluated by using the number of total regions obtained (i.e., $\hat{N}_{\Omega}$) divided by the the number of total $x$-level sub-regions obtained, which gives $N_q=(\frac{b-a}{s})^{m\times N_p}$.\par

The sum of the binary variables $\xi_1,\xi_2,\ldots,\xi_{N_q}$ in Eq. \ref{eq:MIQP} is equal to 1 since there is only one best control action that gives the smallest value of the objective function among all candidate control actions due to its convexity. For example, if $\xi_{n_q}$ is the only non-zero term among all binary variables in a solution to Eq. \ref{eq:MIQP}, then the control action given by the $\xi_{n_q}$th mpQP problem is considered the best control action that should be applied to the system. From the perspective of an objective function, each binary variable is associated with a control action that is optimal within each segment of the discretized state-space, which greatly reduces the computational burden of finding the best control action as the number of candidate control actions is narrowed down from infinite (i.e., continuous space) to finite (i.e., discretized space). While the possibility of identifying the optimal solution increases with a finer grid, this comes at the cost of significantly increased computational demands. To solve an MIQP problem, the most commonly used method is to employ an advanced optimizer that utilizes built-in optimization algorithms to obtain the optimal solution. Specifically, we first formulate the optimization problem of Eq. \ref{eq:MIQP} in Pyomo, a Python-based open-source software package\cite{hart2011pyomo}. The optimization problem is then solved via Gurobi, an advanced optimizer that supports a variety of problems including MIQP\cite{gurobi}.

\begin{rmk}
The convexity of the MIQP problem of Eq. \ref{eq:MIQP} guarantees that local optimization algorithms such as greedy algorithms that utilize local optimality, can find the global optimum of a convex optimization problem \cite{devore1996greedy}. Starting with a feasible solution, the greedy algorithm compares the current solution with adjacent feasible solutions to determine the starting point for the next iteration. This approach can guarantee the global optimality of the solution produced in the last iteration step when applied to a convex optimization problem. Additionally, an exhaustive approach that explicitly considers all the possible combinations of the binary variables in Eq. \ref{eq:MIQP} is also feasible but not efficient in obtaining the global optimum\cite{wolsey2014MIP}. Each combination leads to a quadratic programming (QP) problem that is degraded from the original MIQP problem. The number of possible combinations of binary variables in Eq. \ref{eq:MIQP} is exactly equal to $N_q$, which is the number of mpQP problems obtained. The greedy algorithm is problem-oriented, as it requires a specific rule to guide the search process, while the exhaustive approach will become computationally intensive if a large $N_q$ is encountered. Therefore, both have limitations in solving a convex MIQP problem.
\end{rmk}

\begin{rmk}
    In this work, we assume that all states are measurable. One way to apply the proposed explicit ICNN-MPC to systems with state measurement data that are not available is to adopt a state estimator. Specifically, in Ref. \citenum{alhajeri2021RNN-output-feedback}, a neural network (NN)-based state estimator was developed for MPC based on output feedback and ML models. The design of NN-based state estimator can be integrated into the proposed explicit ICNN-MPC framework to obtain optimal control actions for systems with unmeasured states. Additionally, explicit output-feedback-based MPC has also been studied by Ref.~\cite{grancharova2011explicit-output-feedback} where multi-parametric nonlinear programming approach is utilized to explicitly solve output-feedback nonlinear MPC problems. Therefore, to implement explicit ICNN-MPC in an output-feedback fashion, a potential strategy is to adopt ICNN as the state estimator and explore the sufficient conditions under which the optimization problem in MPC is convex.
\end{rmk}

\begin{rmk}
    Model-plant mismatch is a critical issue when using deterministic neural network (NN) models (e.g., ICNN) as the predictive model to predict system dynamics in MPC. When this mismatch occurs in a time-dependent manner in a time-variant system, due to, for example, system wear or aging, incorporating system runtime into the NN training process may be one solution to mitigating the issue. In this case, runtime is treated as an uncertainty parameter within the explicit ICNN-MPC framework. In general, if the mismatch arises from external disturbances in the system, a disturbance estimator can be employed to estimate the disturbance value that will be used in the design of MPC. Additionally, online machine learning can be used to update the machine learning model online using real-time data to mitigate the effect of model-plant mismatch due to time-varying disturbances~\cite{zheng2022online-learning,hu2023online}.
\end{rmk}

\section{Application to Chemical Processes}
In this section, we apply explicit ICNN-MPC to two chemical processes to demonstrate its effectiveness in solving real-time MPC problems. Specifically, we first consider a simple chemical reactor example to demonstrate that explicit ICNN-MPC can achieve the desired closed-loop performance that is similar to that under explicit ML-MPC associated with conventional FNN. Then, we consider a more complex chemical process network consisting of two chemical reactors and a flash drum in Aspen Plus Dynamics to control ethylbenzene production. ML models presented in this section are developed using Pytorch\cite{paszke2019pytorch} and trained using Adam Optimizer\cite{kingma2014adam}.

\subsection{Case study 1: Continuous Stirred Tank Reactor}\label{CSTR-1}
\subsubsection{Problem Formulation}\label{case-1-sub1}
In this section, we consider a homogeneous liquid-phase reaction $A\rightarrow B$ that occurs in a continuous stirred tank reactor (CSTR) under the assumption that the CSTR is well-mixed and non-isothermal while the reaction is second-order irreversible and exothermic. The chemical reaction is governed by the following energy and material balance equations:
\begin{subequations}\label{eq:CSTR-1}
\begin{align}
\frac{dC_A}{dt}=\frac{F}{V}(C_{A0}-C_A)-&k_0e^{\frac{-E_a}{RT}}C^2_A \label{eq:CSTR-mb}\\
\frac{dT}{dt}=\frac{F}{V}(T_{0}-T)+\frac{Q}{\rho_LC_pV}&-\frac{\Delta_rH}{\rho_L C_p}k_0e^{\frac{-E_a}{RT}}C^2_A \label{eq:CSTR-eb}
\end{align}
\end{subequations}
where $C_{A0}$ and $T_0$ represent the concentration of $A$ in the inlet flow and the temperature of the inlet flow, respectively. The concentration of $A$ in the CSTR is denoted by $C_A$ while the temperature of the CSTR is denoted by $T$. The CSTR is equipped with a heating jacket with heat input rate denoted as $Q$. The volumetric flow rate of the input feed and the volume of the CSTR are denoted as $F$ and $V$, respectively. The full description of the process parameters shown in Eq.~\ref{eq:CSTR-1} can be found in Table \ref{table:case1} and Ref. \citenum{wu2021processbook}, and is omitted in this section.\par

The controller is designed to maintain the CSTR operation at an unstable steady-state of which the corresponding parameters are $C_{A0_s}=4\;\rm kmol/m^3$, $C_{As}=1.95\;\rm kmol/m^3$. The manipulated vector is $u=[\Delta C_{A0},\;\Delta Q]^\top$ in which the deviation variable forms of the concentration of $A$ (i.e., $\Delta C_{A0}=C_{A0}-C_{A0_s}$) and the heat input rate (i.e., $\Delta Q=Q-Q_{s}$) are considered. The two control inputs, $\Delta C_{A0}$ and $\Delta Q$, are bounded within $[-3.5\;\rm kmol/m^3,\;3.5\;\rm kmol/m^3]$ and $[-5\times 10^5\;\rm kJ/hr,\;5\times 10^5\;\rm kJ/hr]$, respectively. In this case, the state vector is $x=[x_1,\;x_2]^\top=[C_A-C_{As},\;T-T_s]^\top$ to render the origin of the state-space an equilibrium point of Eq. \ref{eq:CSTR-1}. Additionally, the states of the CSTR are assumed to be measurable in real time for feedback control. The sampling period $\Delta_t$ is set to $10^{-2}$ hr, and the prediction horizon is set to two. While the method can handle longer prediction horizons, this short horizon is chosen for demonstration purposes to facilitate the visualization of the results for explicit ICNN-MPC. Sample-and-hold implementation strategy is adopted when applying control actions.
\begin{table}
\centering
\caption{Process parameters of the CSTR}
\label{table:case1}
\begin{tabular}{lll}
\midrule 
    $E=5\times10^4\;\rm kJ/kmol$     &$C_{A0_s}=4\;\rm kmol/m^3$    &$\Delta_rH=-1.15\times10^4\;\rm kJ/kmol$\\
    $V=1\;\rm m^3$                   &$C_{As}=1.95\;\rm kmol/m^3$   &$C_p=0.231\;\rm kJ/kg\cdot K$\\
    $F=5\;\rm m^3/hr$                &$Q_{s}=0.0\;\rm kJ/hr$        &$\rho_L=1000\;\rm kg/m^3$\\
    $R=8.314\;\rm kJ/kmol\cdot K$    &$T_{s}=402\;\rm K$           &$k_0=8.46\times10^6\;\rm m^3/kmol\cdot hr$\\
    $T_0=300\;\rm K$                 &                             & \\
\bottomrule
\end{tabular}
\end{table}
\subsubsection{Implementation Details}\label{case-1-sub2}
We first conduct extensive open-loop simulations based on Eq. \ref{eq:CSTR-mb} and Eq. \ref{eq:CSTR-eb} to generate the training data for ICNN to capture the nonlinear system dynamics of the CSTR. Specifically, we focus on the system dynamics that features $x_1\in[-1.95\;\mathrm{kmol/m^3},1.95\;\mathrm{kmol/m^3}]$ and $x_2\in[-90\;\mathrm{K},90\;\mathrm{K}]$. The raw data is scaled down to the range of $[-1,1]$ to facilitate the training process and is split into three datasets for training, validation, and testing purposes. Two ICNN models of Eq. \ref{eq:icnn} are trained to predict $\bar{x}_{t+1|t}$ and $\bar{x}_{t+2|t}$ using $(x_{t|t},u_{t|t})$ and $(x_{t|t},u_{t|t},u_{t+1|t})$, respectively. Another two FNN models are also trained, but to predict the exact values of the system states (i.e., $x_{t+1|t}$ and $x_{t+2|t}$) for comparison purposes in developing explicit ML-MPC. The model architectures of the ICNN models and the FNN models are shown in Fig. \ref{fig:3}. The red dashed paths and red solid paths represent the data flow of the ICNN models, whereas the green solid paths denote the data flow of the FNN models. Under each prediction scenario (i.e., prediction horizon of one or two), these two models are designed to have the same number of layers and neurons per layer to ensure a fair comparison of their performances. Mean Square Error (MSE) loss is chosen as the criteria to evaluate model performance for the four ML models. Note that the last activation function of the ICNN models is set to ReLU due to the non-negative output constraint, while that of the FNN models is set to linear activation.\par

The MSE values of the four models for training, validation, and testing datasets under different prediction scenarios are shown in Fig. \ref{fig:4}. As seen from the figure, the ICNN models achieve higher MSE values in all datasets than the FNN models because the representation ability of ICNN is limited by the non-negative constraint imposed on the weights. Overfitting is minor as the differences in MSE values between the training and testing sets are less than $10^{-5}$ for all models. Under the explicit ML-MPC framework, the four ML models are linearized using a self-adaptive approximation algorithm to produce piecewise linear affine functions. The approximation error bound required in the algorithm is set to 15\% while the minimum length constraint for preventing over-discretization is set to 0.125. In this case, the numbers of segments needed for approximating $x_{1,t+1|t}$, $x_{2,t+1|t}$, $x_{1,t+2|t}$, and $x_{2,t+2|t}$ in the ICNN models are 1441, 1276, 82909, and 82216, respectively. The corresponding numbers for the FNN models are 2041, 1441, 99037, and 102376. Fig. \ref{fig:5} shows the discretization of $x_{t|t}$ space (i.e., a two-dimensional space consisting of $x_{1,t|t}$, and $x_{2,t|t}$) in predicting $x_{1,t+1|t}$, $x_{2,t+1|t}$, $x_{1,t+2|t}$, and $x_{2,t+2|t}$ using the ICNN models. Each sub-figure in Fig. \ref{fig:5} presents a partial profile of the discretized state-space with respect to different predicted variables. The weight matrices $\boldsymbol{M}$ and $\boldsymbol{N}$ in Eq. \ref{eq:mpqp} are set to $[500\;0;0\;0.5]$ and $[1\;0;0\;8\times10^{-11}]$ for this case study, as $x$ and $u$ are both of two dimensions and their magnitudes need to be taken into account to balance the value of the objective function. The limit for the computation time of the optimizer is set to the same value of the sampling period $\Delta_t$ (i.e., 0.01 hr) for an effective real-time implementation of MPC.

\begin{figure}[h]
\includegraphics[width=\textwidth]{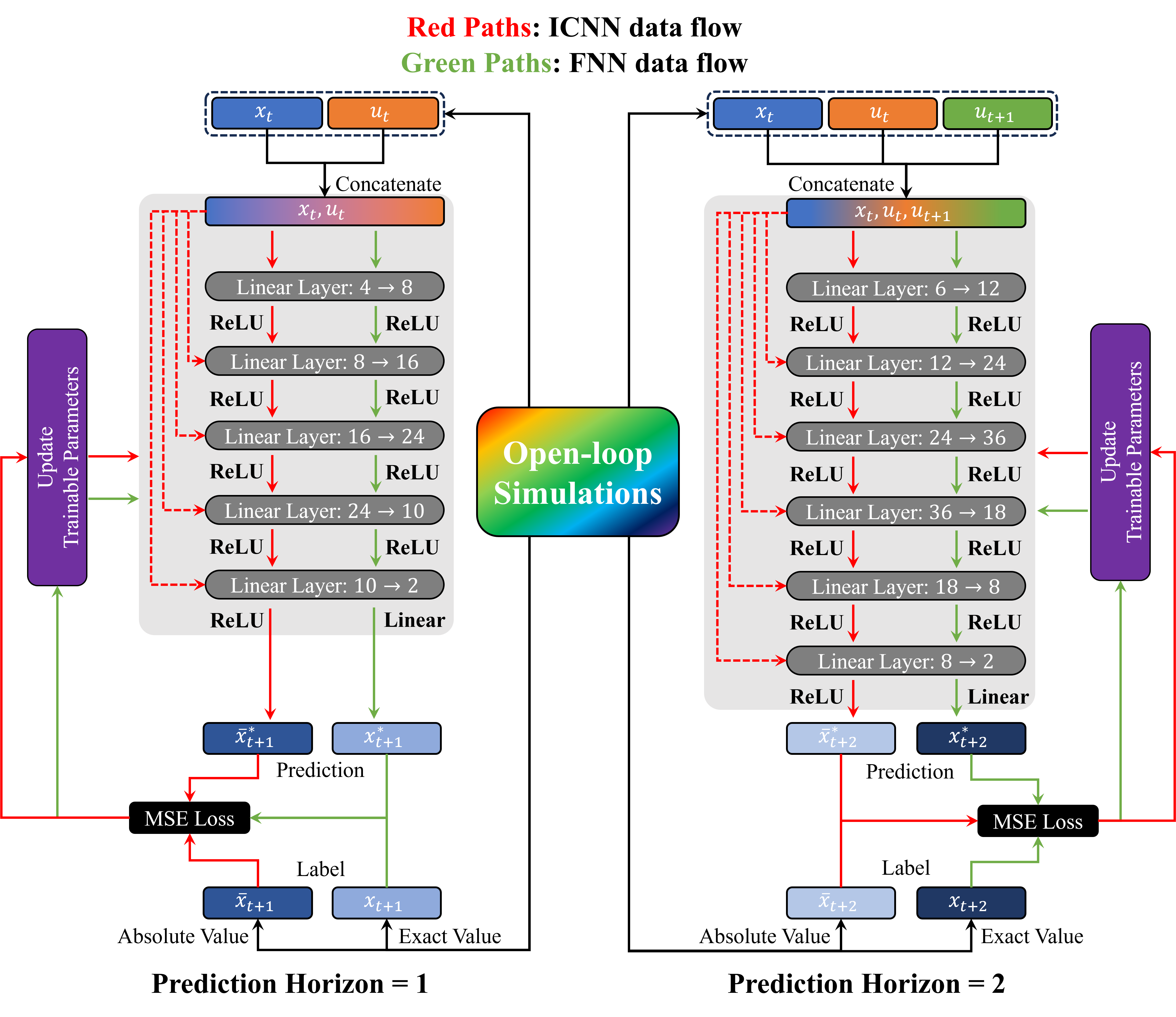}
\centering
\caption{Training strategy and model architectures of ICNN and FNN.}
\label{fig:3}
\end{figure}

\begin{figure}[h]
\centering
\begin{subfigure}{0.495\textwidth}
    \includegraphics[width=\textwidth]{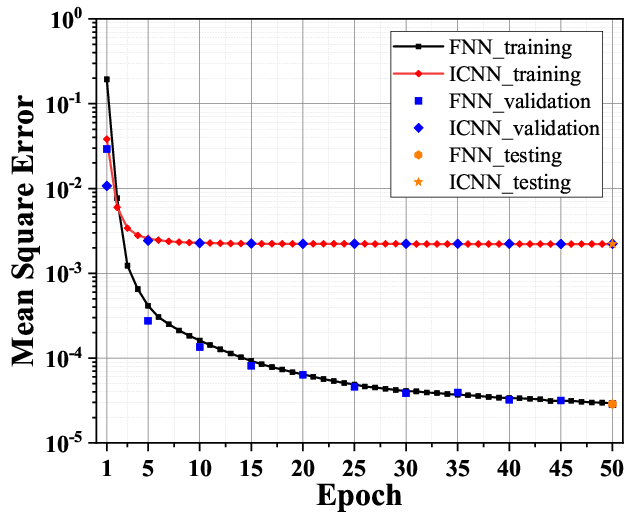}
    \caption{MSE values of ICNN and FNN where prediction horizon equals one.}
    \label{fig:4_a}
\end{subfigure}
\hfill
\begin{subfigure}{0.495\textwidth}
    \includegraphics[width=\textwidth]{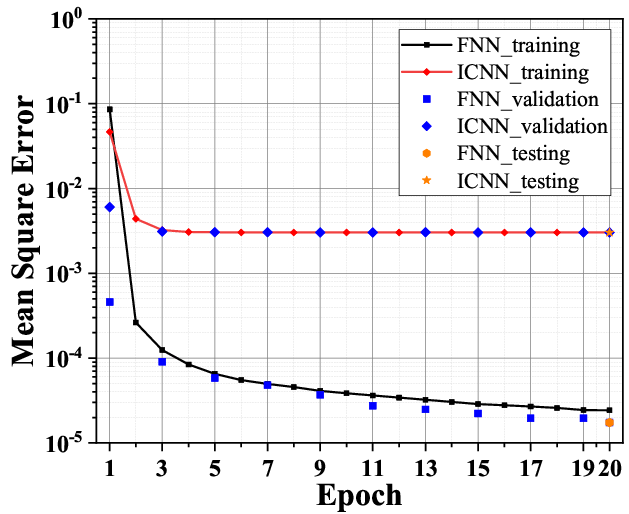}
    \caption{MSE values of ICNN and FNN where prediction horizon equals two.}
    \label{fig:4_b}
\end{subfigure}
\hfill
\caption{MSE values of ICNN and FNN on training, validation, and testing datasets.}
\label{fig:4}
\end{figure}

\begin{figure}[h]
\centering
\begin{subfigure}{0.495\textwidth}
    \includegraphics[width=\textwidth]{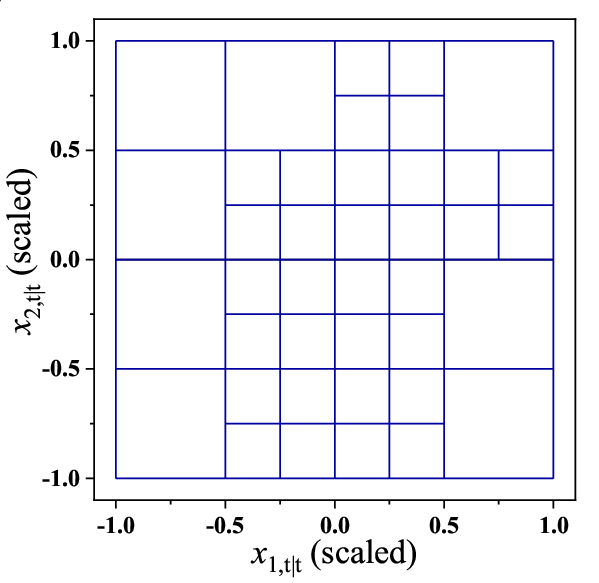}
    \caption{$x$-level discretization of $x_{1,t+1|t}$.}
    \label{fig:5_a}
\end{subfigure}
\hfill
\begin{subfigure}{0.495\textwidth}
    \includegraphics[width=\textwidth]{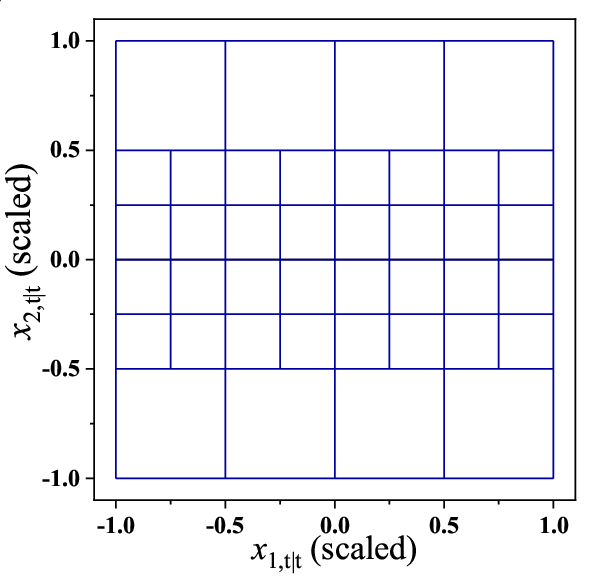}
    \caption{$x$-level discretization of $x_{2,t+1|t}$.}
    \label{fig:5_b}
\end{subfigure}
\hfill
\newline
\begin{subfigure}{0.495\textwidth}
    \includegraphics[width=\textwidth]{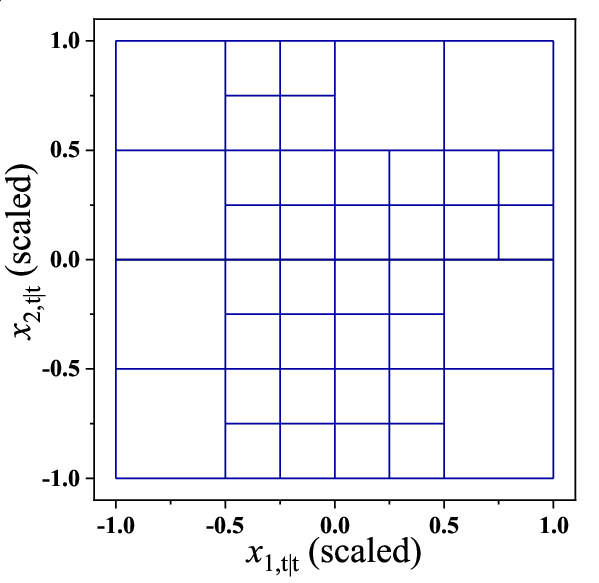}
    \caption{$x$-level discretization of $x_{1,t+2|t}$.}
    \label{fig:5_c}
\end{subfigure}
\hfill
\begin{subfigure}{0.495\textwidth}
    \includegraphics[width=\textwidth]{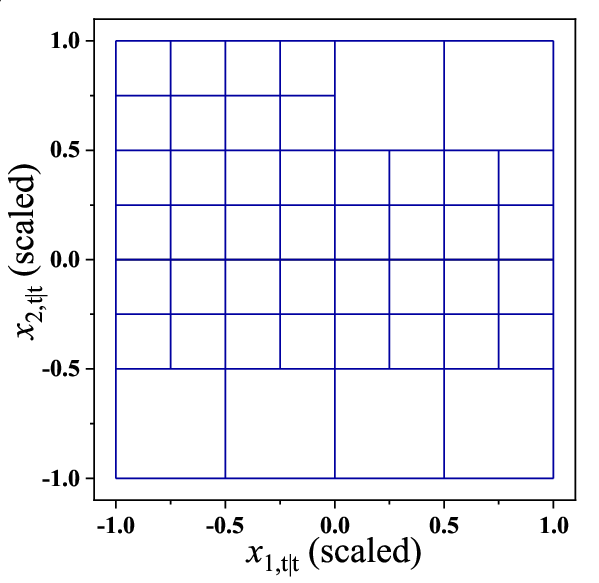}
    \caption{$x$-level discretization of $x_{2,t+2|t}$.}
    \label{fig:5_d}
\end{subfigure}
\caption{Discretization of state-space associated with different piecewise linear affine functions.}
\label{fig:5}
\end{figure}

\begin{figure}[h]
\includegraphics[width=\textwidth]{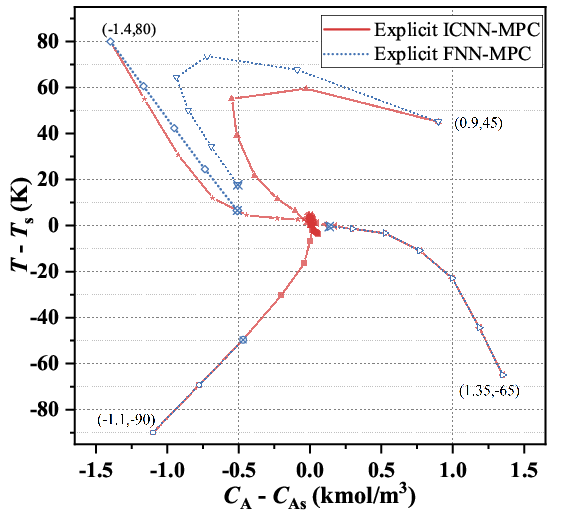}
\centering
\caption{State trajectories starting from four different initial conditions under explicit ICNN-MPC and explicit FNN-MPC, respectively.}
\label{fig:6}
\end{figure}

\begin{figure}[h]
\includegraphics[width=\textwidth]{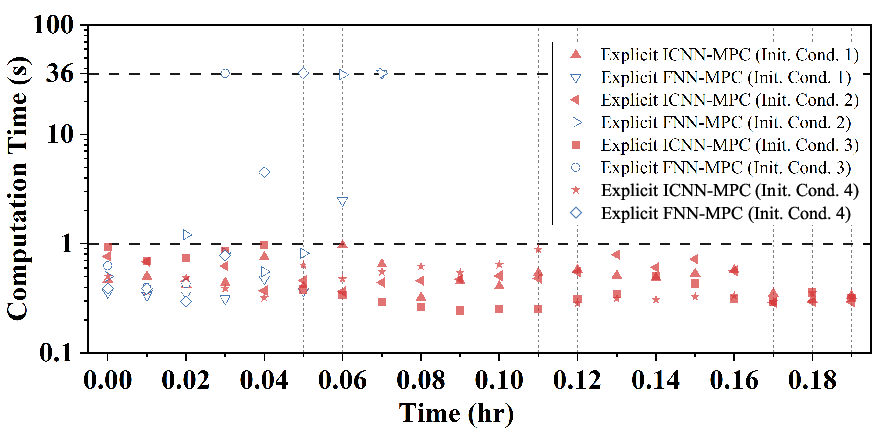}
\centering
\caption{Computation time for solving explicit ML-MPCs at each sampling time under different initial conditions.}
\label{fig:6_s1}
\end{figure}

\begin{figure}[ht]
\centering
\begin{subfigure}{\textwidth}
    \includegraphics[width=\textwidth]{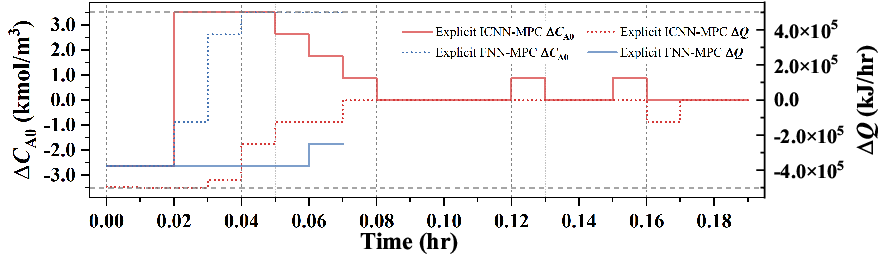}
    \caption{Profiles of control actions for the initial condition $(0.9\;\rm kmol/m^3,45\;\rm K)$ under explicit ICNN-MPC and explicit FNN-MPC.}
    \label{fig:7_a}
\end{subfigure}
\newline
\begin{subfigure}{\textwidth}
    \includegraphics[width=\textwidth]{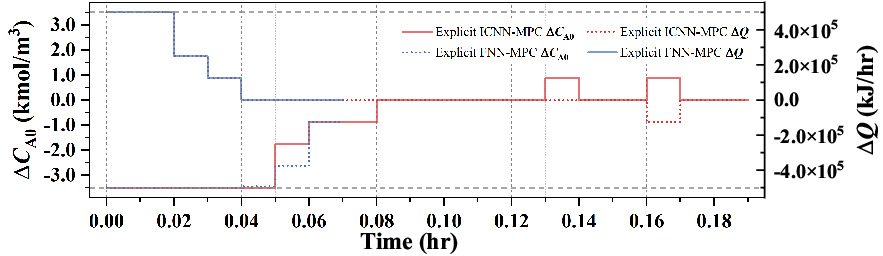}
    \caption{Profiles of control actions for the initial condition $(1.35\;\rm kmol/m^3,-65\;\rm K)$ under explicit ICNN-MPC and explicit FNN-MPC.}
    \label{fig:7_b}
\end{subfigure}
\newline
\begin{subfigure}{\textwidth}
    \includegraphics[width=\textwidth]{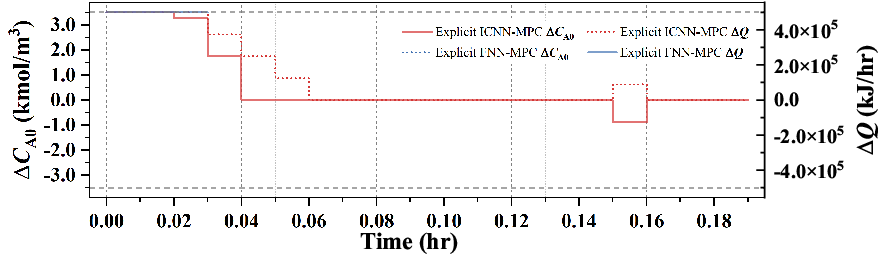}
    \caption{Profiles of control actions for the initial condition $(-1.1\;\rm kmol/m^3,-90\;\rm K)$ under explicit ICNN-MPC and explicit FNN-MPC.}
    \label{fig:7_c}
\end{subfigure}
\newline
\begin{subfigure}{\textwidth}
    \includegraphics[width=\textwidth]{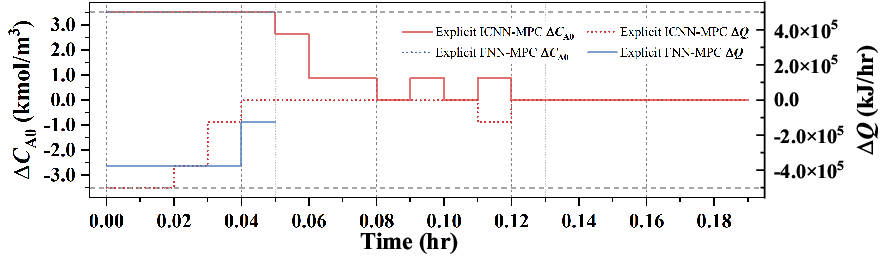}
    \caption{Profiles of control actions for the initial condition $(-1.4\;\rm kmol/m^3,80\;\rm K)$ under explicit ICNN-MPC and explicit FNN-MPC.}
    \label{fig:7_d}
\end{subfigure}
\caption{Comparison of control actions under explicit ICNN-MPC and explicit FNN-MPC.}
\label{fig:7}
\end{figure}

\subsubsection{Simulation Results}\label{case-1-sub3}
Closed-loop simulations for twenty sampling steps are performed using the proposed explicit ICNN-MPC and explicit FNN-MPC. Specifically, the following four different initial conditions: (1) $(0.9\;\mathrm{kmol/m^3}, 45\;\mathrm{K})$, (2) $(1.35\;\mathrm{kmol/m^3}, -65\;\mathrm{K})$, (3) $(-1.1\;\mathrm{kmol/m^3}, -90\;\mathrm{K})$, (4) $(-1.4\;\mathrm{kmol/m^3}, 80\;\mathrm{K})$ are used to demonstrate the superiority of the proposed explicit ICNN-MPC   over the explicit FNN-MPC in finding optimal control actions. As shown in Fig. \ref{fig:6} and Fig. \ref{fig:6_s1}, in each case, the explicit ICNN-MPC successfully drives the system state from all the initial conditions to the origin within twenty sampling steps, while the computation time remains less than one second throughout the entire simulation. However, the time required for solving the explicit FNN-MPC exceeds the time limit (i.e., $10^{-2}$ hr) when the states approach the steady-state, resulting in the termination of subsequent simulations. The system states where the explicit FNN-MPC fail to give the optimal control actions within the pre-set time limit are labeled with a cross sign in Fig. \ref{fig:6}. The corresponding profiles of the control actions given by the two controllers under four different initial conditions are shown in Fig. \ref{fig:7}. Therefore, it is demonstrated that explicit ICNN-MPC is capable of controlling the system states of the CSTR to the desired unstable steady-state while significantly reducing the computation time.

\subsection{Case study 2: Chemical Process Network in Aspen Plus Dynamics}\label{case-2}
\subsubsection{Problem Formulation}\label{case-2-sub1}
In this section, we present a case study in which the proposed explicit ICNN-MPC is applied to a chemical process network simulated in Aspen Plus Dynamics. Specifically, the production of ethylbenzene (EB) via benzene (B) and ethylene (E)   using two  CSTRs and one flash drum is considered, as shwon in Fig. \ref{fig:8}.
\begin{figure}[h]
\includegraphics[width=\textwidth]{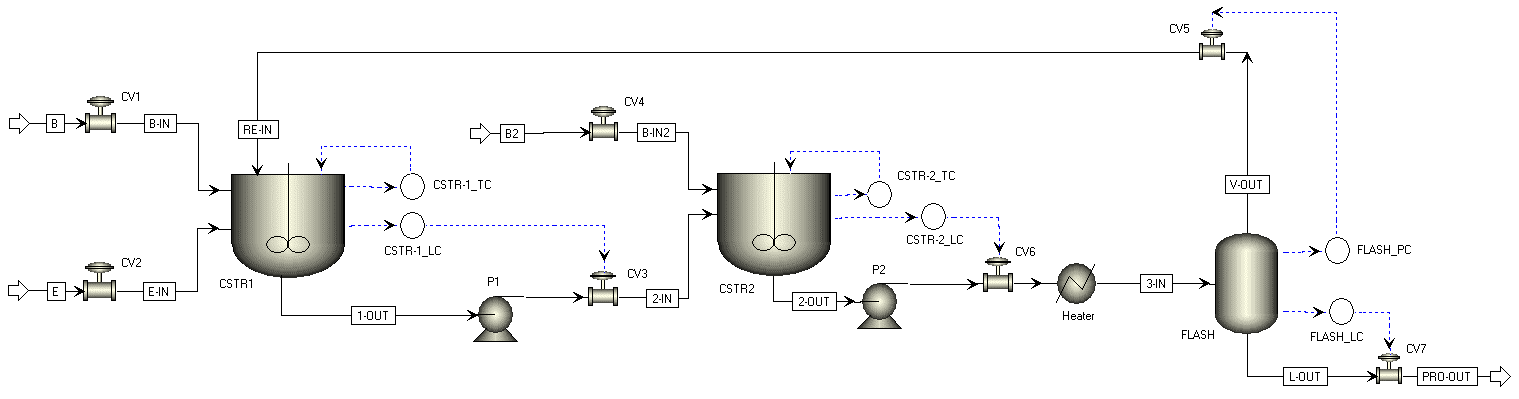}
\centering
\caption{Flow sheet of the ethylbenzene production process modeled by Aspen Plus Dynamics.}
\label{fig:8}
\end{figure}
Diethylbenzene (DEB), a side product of this production process, is generated due to the over substitution of hydrogen atoms in benzene. Additionally, DEB can react with B to produce the desired product EB. The stoichiometric relation and the corresponding Arrhenius equation for the above reactions are taken from Ref. \citenum{alhajeri2022aspendynamics_para} and given as follows:
\begin{subequations}\label{eq:case-2}
\begin{align}
&E+B\rightarrow EB,\;\phantom{2BD}k_1=k_{0,1}e^{\frac{-E_{a,1}}{RT}},\;k_{0,1}=1.528\times10^{6},\;E_{a,1}=71.660\;\mathrm{kJ\cdot mol^{-1}}\label{eq:kinetics-1}\\
&EB+E\rightarrow DEB,\;\phantom{2}k_2=k_{0,2}e^{\frac{-E_{a,2}}{RT}},\;k_{0,2}=2.778\times10^{5},\;E_{a,2}=83.680\;\mathrm{kJ\cdot mol^{-1}} \label{eq:kinetics-2}\\
&DEB+B\rightarrow 2EB,\;k_3=k_{0,3}e^{\frac{-E_{a,3}}{RT}},\;k_{0,3}=4.167\times10^{-1},\;E_{a,3}=62.760\;\mathrm{kJ\cdot mol^{-1}} \label{eq:kinetics-3}
\end{align}
\end{subequations}

\noindent where $k_1$, $k_2$, and $k_3$ are the rate constants for the three reactions, respectively. $k_{0,1}$, $k_{0,2}$, and $k_{0,3}$ are the pre-exponential factors, while $E_{a,1}$, $E_{a,2}$, and $E_{a,3}$ are the molar activation energies. $R$ is the universal gas constant in $\mathrm{J\cdot mol^{-1}\cdot K^{-1}}$ and $T$ is the reaction temperature in K. The temperatures of the two CSTR are fixed at $\mathrm{120\;^\circ C}$ by prompt proportional–integral–derivative (PID) controllers to provide an isothermal condition in case of fluctuation of the inlet flow rate. The flash drum pressure is fixed at 15.9 bar by another PID controller to provide an isobaric condition for the flash evaporation process. The two CSTR and the flash drum are also equipped with level controllers to ensure equipment safety. A heater is used to heat the mixed product flow from the second CSTR (i.e., CSTR2) such that most of the unreacted E can be recycled through an adiabatic flash evaporation process that occurs in the subsequent flash drum. The process parameters for this chemical process are listed in Table \ref{table:case2}.\par
\begin{table}
\centering
\caption{Process parameters of the ethylbenzene production}
\label{table:case2}
\begin{tabular}{llll}
\toprule
    Parameter &Value & Unit &Description\\
\midrule 
    $F_{B}$         &100 &kmol/hr     &Molar flow rate of stream B\\
    $F_{E}$         &150 &kmol/hr     &Molar flow rate of stream E\\
    $F_{B2}$       &60 &kmol/hr      &Molar flow rate of stream B-2\\
    $T_{B}$         &60 &$\mathrm{^\circ C}$   &Temperature of  stream B\\
    $T_{E}$         &60 &$\mathrm{^\circ C}$   &Temperature of  stream E\\
    $T_{B2}$       &60 &$\mathrm{^\circ C}$   &Temperature of stream B2\\
    $P_{B}$         &20 &bar   &Pressure of stream B\\
    $P_{E}$         &20 &bar   &Pressure of  stream E\\
    $P_{B-2}$       &20 &bar   &Pressure of  stream B2\\
    $T_{CSTR1}$    &120 &$\mathrm{^\circ C}$    &Operating temperature of CSTR1\\
    $T_{CSTR2}$    &120 &$\mathrm{^\circ C}$    &Operating temperature of CSTR2\\
    $P_{CSTR1}$    &15 &bar     &Operating pressure of CSTR1\\
    $P_{CSTR2}$    &15 &bar    &Operating pressure of CSTR2\\
    $P_{FD}$        &15.9 &bar   &Pressure of the flash drum\\
    $Q_{0}$         &3.5 &GJ/hr    &Heat duty of the heater under production goal\\
    $X_{EB,0}$      &0.8326 &kmol/koml  &\makecell[ll]{Molar fraction of EB in the stream PRO-OUT \\under production goal}\\
\bottomrule
\end{tabular}
\end{table}

The EB production process is designed to operate at a steady-state where the molar fraction of EB in the stream PRO-OUT (denoted as $X_{EB}$) is 0.8326 kmol/kmol under a constant feeding condition (i.e., the flow rates and temperatures of the inlet flows $B$, $E$, and $B2$ remain unchanged). The heat duty of the heater (denoted as $Q$) is selected to control $X_{EB}$ by changing the temperature of the inlet flow of the flash drum. The deviation variable form for the heat duty of the heater (i.e., $\Delta Q=Q-Q_{0}$) is used as the control input, with a lower bound of 0 GJ/hr and an upper bound of 7 GJ/hr. In this case, $x=X_{EB}-X_{EB,0}$ is the state, and $u=\Delta Q$ is the manipulated input. The origin of the state-space is then rendered as an equilibrium point of the process system shown in Fig. \ref{fig:8}. Additionally, $X_{EB}$ is assumed to be measurable in real time for feedback control. The sampling period $\Delta_t$ is set to 2 hrs, and sample-and-hold implementation strategy is adopted. 

\subsubsection{Implementation Details}\label{case-2-sub2}
We first performed extensive open-loop simulations for the chemical process shown in Fig. \ref{fig:8} via Aspen Plus Dynamics. The generated data is used to train ICNN models to capture the nonlinear system dynamics. The data collected is scaled to the range of [-1,1] by min-max operation and is then used to train two ICNN models that predict the absolute values of the states over one and two sampling periods, respectively. The MSE value of the first ICNN model on the testing set is $8.8\times 10^{-4}$ while that of the second ICNN model is $1.2\times 10^{-3}$. Following the same linearization steps as in our previous work, the state-space of the chemical process is discretized to enable the development of explicit ICNN-MPC in which mpQP problems are solved in each segment to provide the optimal control action for the given system state. The approximation error bound is set to 3\% while the minimum length constraint is set to 0.125. The numbers of segments needed for approximating $x_{t+1|t}$ and $x_{t+2|t}$ in the ICNN models are 79 and 981, respectively. The weight terms $\boldsymbol{M}$ and $\boldsymbol{N}$ in Eq. \ref{eq:mpqp} are set to $5\times 10^{4}$ and $5\times 10^{-2}$ for this case study.\par

In this work, modifying the values and retrieving the data from Aspen Plus Dynamics are realized using a Python library termed ``win32com'', which has been discussed in Ref. \citenum{yamanee2020aspen-python}. The original program in Ref. \citenum{yamanee2020aspen-python} is revised to integrate explicit ICNN-MPC with Apsen dynamic simulation to achieve closed-loop control of chemical processes. Specifically, the new program includes the following four code blocks: \par
\textbf{Code Block 1: Aspen Plus Dynamics Initialization}. The role of this code block is to establish the connection between Aspen Plus Dynamics and Python using the ``win32com'' library. Simulation file with a ``.dynf'' file extension will be opened by the Aspen Plus Dynamics software and data handlers for streams and blocks are obtained in this step for later use.

\textbf{Code Block 2: Data Retrieval and Modification}. The data of streams and blocks can be retrieved and modified in real time by using the obtained handlers. For example, 
the molar flow rate of a substance denoted as ``A'' in the stream ``S1'' can be retrieved and stored in a variable $\mathrm{F_{A}}$ by using a command ``$\mathrm{F_{A}}$\;=\;STREAMS(S1).Tcn(A).Value'' and be changed to a new value $\mathrm{\bar{F}_{A}}$ by using another command ``STREAMS(S1).Tcn(A).Value\;=\;$\mathrm{\bar{F}_{A}}$'' where ``STREAMS'' is the name of the stream handler and ``Tcn'' is the notation for molar flow rate in Aspen Plus Dynamics.

\textbf{Code Block 3: Optimal Control via Explicit ICNN-MPC}. The proposed explicit ICNN-MPC first locates the region where the current state falls into such that the associated candidate control actions can be obtained. A convex MIQP problem is then formulated, and efficiently solved by Gurobi optimizer to give the optimal control action. It should be noted that this code block can be modified solely to implement novel control strategies in Aspen Plus Dynamics without changing the other parts of the program.

\textbf{Code Block 4: Simulation}. This is where information from different code blocks are exchanged and processed in order to apply the control action to the Aspen Plus Dynamics model. After establishing the connection between Aspen Plus Dynamics and Python, the data retrieved from Aspen Plus Dynamics is passed to the explicit ICNN-MPC to obtain the optimal control action and apply it for one sampling period. This ``Retrieval-Optimize-Evolve'' loop continues until the pre-set simulation time limit has been reached.

\begin{figure}[ht]
\includegraphics[height=\textheight]{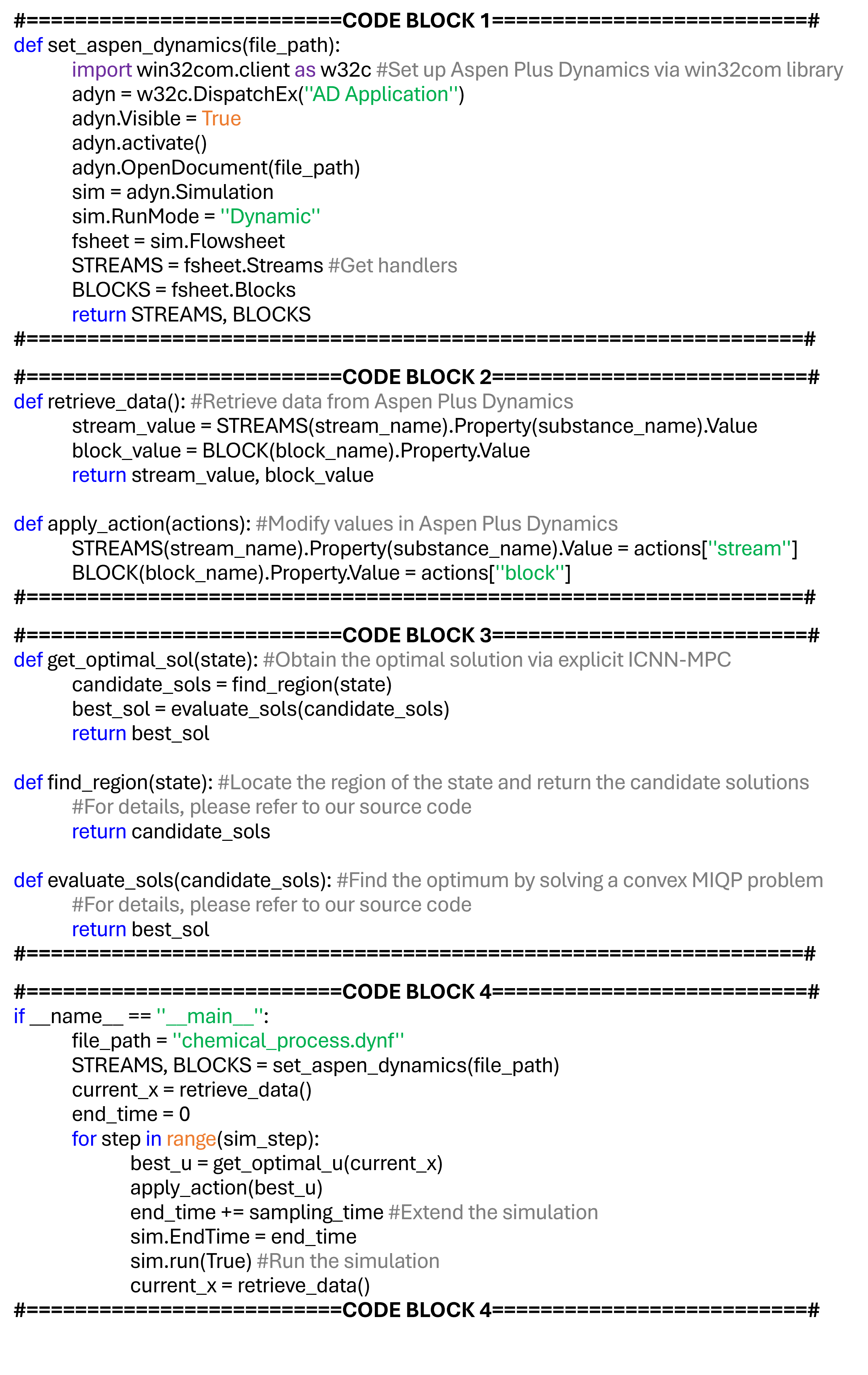}
\centering
\caption{Python script for closed-loop control of chemical processes simulated by Aspen Plus Dynamics via explicit ICNN-MPC.}
\label{fig:9}
\end{figure}

Closed-loop simulations of 30 hrs are conducted in Aspen Plus Dynamics using the explicit ICNN-MPC developed for this case study. Specifically, the simulation is initially carried out under $u=0\;\mathrm{GJ/hr}$ for 3 hrs to stabilize the system at the steady-state. Subsequently, the system is manually deviated from the set-point by changing the heat duty of the heater for a period of 17 hrs. The proposed explicit ICNN-MPC is activated to control the process from the instance of 20 hrs and the goal is to drive the $X_{EB}$ back to the set-point $X_{EB,0}$. Two initial deviations, including $u=-2\;\mathrm{GJ/hr}$ and $u=2.5\;\mathrm{GJ/hr}$, are considered to test the effectiveness of the proposed explicit ICNN-MPC in controlling $X_{EB}$. To better evaluate the performance of the proposed controller, integral absolute error (IAE) is introduced as a criterion due to its ability to take overshoot, oscillation, and transition into account. The equation for calculating IAE is given as follows:
\begin{equation}\label{eq:IAE}
\mathrm{IAE}=\int_{0}^{\mathrm{T}} |e(t)|dt
\end{equation}
where $|e(t)|$ denotes the absolute deviation between the controlled variable $X_{EB}$ and its set-point $X_{EB,0}$ while $T$ is the total time of closed-loop simulation (i.e., 30 hrs for this case study). The controller with a smaller IAE value indicates a better tracking performance. Additionally, settling time $T_s$ is introduced to quantitatively evaluate the convergence of $X_{EB}$ under different controllers. In this work, $T_s$ is defined as the time required for $X_{EB}$ to reach and remain within a $0.3\%$ error band of $X_{EB,0}$.
Controller with a smaller $T_s$ value  is more desirable due to  a faster transient response.
\begin{rmk}
    Although the joint simulation between Aspen Plus Dynamics and Matlab has already been developed for optimization-based control of chemical processes\cite{zhang2024matlab_aspen}, integration of Aspen Plus Dynamics with Python is attracting increasing attention due to the increasing number of Python-based ML models that contribute to the development of ML-MPC\cite{wu2019ML-MPC1,wu2023ML-MPC2,xiao2023ML-MPC3}. The communication between Aspen Plus Dynamics and Python will enable the fast validation of complex ML-based controllers or optimizers with the help of various ML libraries provided in Python community. Additionally, the cross-platform feature of Python allows it to act as a coordinator under which information from different devices can be gathered and processed to achieve a more efficient utilization of resources.
\end{rmk}

\subsubsection{Simulation Results}\label{case-2-sub3}
The trajectories of $X_{EB}$ obtained from closed-loop simulations under different initial conditions are shown in Fig. \ref{fig:10}. For comparison purposes, the closed-loop simulations using open-loop control strategy (i.e., using the steady-state values of control actions for all times) were also conducted for the two different initial conditions. The performance of the proposed explicit ICNN-MPC and open-loop control is evaluated by IAE and $T_s$. The results are listed in Table \ref{table:case2-IAE}. The profiles of control actions are shown in Fig. \ref{fig:11}. As seen in Fig. \ref{fig:10}, the proposed explicit ICNN-MPC is able to drive the $X_{EB}$ to the desired set-point under two different initial conditions with smaller IAE and $T_s$ values than open-loop control. Although open-loop control successfully drives $X_{EB}$ to its set-point, it comes with larger IAE and $T_s$ values, which will introduce more uncertainty into downstream sections over time. Therefore, the effectiveness of explicit ICNN-MPC in regulating $X_{EB}$ to its set-point is demonstrated under different initial conditions.

\begin{table}
\centering
\caption{Summary for closed-loop simulations under different initial conditions}
\label{table:case2-IAE}
\begin{tabular}{lccc}
\toprule
    Control Strategy    &Initial Value / (kmol/kmol) &IAE  &$T_s$ / hrs\\
\midrule 
    Explicit ICNN-MPC   &$X_{EB}=0.793$ &0.0488  &1.90\\
    Open-loop Control   &$X_{EB}=0.793$ &0.0784  &5.60\\
    Explicit ICNN-MPC   &$X_{EB}=0.855$ &0.0352  &4.26\\
    Open-loop Control   &$X_{EB}=0.855$ &0.0449  &7.24\\
\bottomrule
\end{tabular}
\end{table}

\begin{figure}[h]
\includegraphics[width=\textwidth]{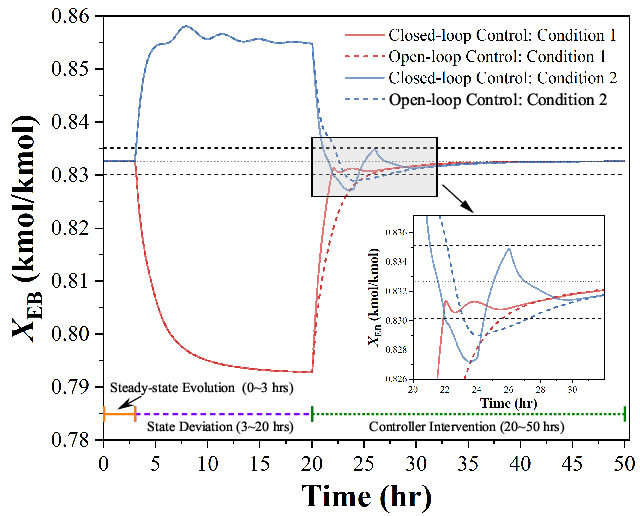}
\centering
\caption{State trajectories obtained from closed-loop simulations under different deviation conditions.}
\label{fig:10}
\end{figure}

\begin{figure}[h]
\centering
\begin{subfigure}{\textwidth}
    \includegraphics[width=\textwidth]{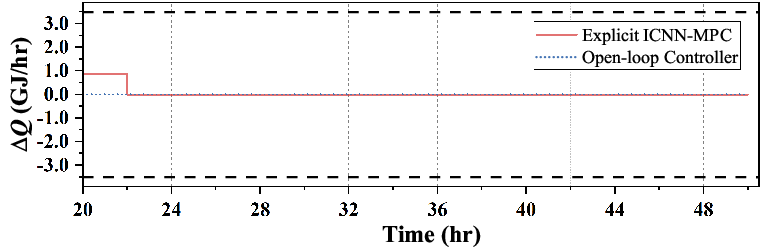}
    \caption{Profiles of control actions under the first deviation condition $\Delta Q=-1.5\;\mathrm{GJ/hr}$.}
    \label{fig:11_a}
\end{subfigure}
\newline
\begin{subfigure}{\textwidth}
    \includegraphics[width=\textwidth]{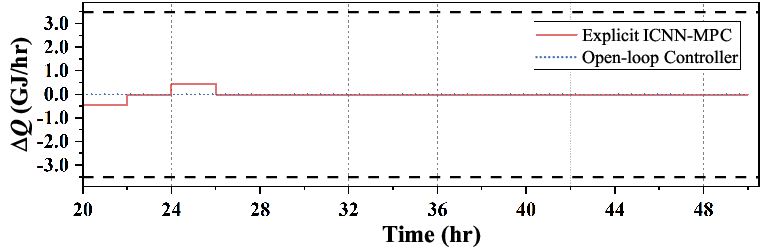}
    \caption{Profiles of control actions under the second deviation condition $\Delta Q=2.5\;\mathrm{GJ/hr}$.}
    \label{fig:11_b}
\end{subfigure}
\hfill
\caption{Profiles of control actions under two different deviation conditions using explicit ICNN-MPC and open-loop controller.}
\label{fig:11}
\end{figure}

\section{Conclusions}
In this work, we utilized ICNN as the predictive model for capturing system dynamics to develop an explicit ICNN-MPC that takes advantage of convex optimization to efficiently determine the optimal control action based on current state measurement. Since the convexity of ICNN does not necessarily guarantee convex optimization problems in explicit ICNN-MPC, we showed the sufficient conditions under which the objective function in explicit ICNN-MPC is guaranteed to be convex, therefore, ensuring global optimal control actions by solving real-time convex MIQP problems. The effectiveness of the proposed explicit ICNN-MPC was demonstrated via two case studies where a single chemical process and a chemical process network simulated by Aspen Plus Dynamics are discussed. The proposed explicit ICNN-MPC was able to regulate the controlled variables while significantly reducing the computation time of solving the optimal control actions.

\section{Acknowledgments}
Financial support from the NRF-CRP Grant 27-2021-0001
 and MOE AcRF Tier 1 FRC Grant (22-5367-
A0001) is gratefully acknowledged.

\providecommand{\latin}[1]{#1}
\makeatletter
\providecommand{\doi}
  {\begingroup\let\do\@makeother\dospecials
  \catcode`\{=1 \catcode`\}=2 \doi@aux}
\providecommand{\doi@aux}[1]{\endgroup\texttt{#1}}
\makeatother
\providecommand*\mcitethebibliography{\thebibliography}
\csname @ifundefined\endcsname{endmcitethebibliography}  {\let\endmcitethebibliography\endthebibliography}{}


\begin{mcitethebibliography}{43}
\providecommand*\natexlab[1]{#1}
\providecommand*\mciteSetBstSublistMode[1]{}
\providecommand*\mciteSetBstMaxWidthForm[2]{}
\providecommand*\mciteBstWouldAddEndPuncttrue
  {\def\EndOfBibitem{\unskip.}}
\providecommand*\mciteBstWouldAddEndPunctfalse
  {\let\EndOfBibitem\relax}
\providecommand*\mciteSetBstMidEndSepPunct[3]{}
\providecommand*\mciteSetBstSublistLabelBeginEnd[3]{}
\providecommand*\EndOfBibitem{}
\mciteSetBstSublistMode{f}
\mciteSetBstMaxWidthForm{subitem}{(\alph{mcitesubitemcount})}
\mciteSetBstSublistLabelBeginEnd
  {\mcitemaxwidthsubitemform\space}
  {\relax}
  {\relax}

\bibitem[Daoutidis \latin{et~al.}(2023)Daoutidis, Megan, and Tang]{daoutidis2023future}
Daoutidis,~P.; Megan,~L.; Tang,~W. The future of control of process systems. \emph{Comput. Chem. Eng.} \textbf{2023}, \emph{178}, 108365\relax
\mciteBstWouldAddEndPuncttrue
\mciteSetBstMidEndSepPunct{\mcitedefaultmidpunct}
{\mcitedefaultendpunct}{\mcitedefaultseppunct}\relax
\EndOfBibitem
\bibitem[Pistikopoulos and Tian(2024)Pistikopoulos, and Tian]{pistikopoulos2024advanced}
Pistikopoulos,~E.~N.; Tian,~Y. Advanced modeling and optimization strategies for process synthesis. \emph{Annu. Rev. Chem. Biomol. Eng.} \textbf{2024}, \emph{15}, 81--103\relax
\mciteBstWouldAddEndPuncttrue
\mciteSetBstMidEndSepPunct{\mcitedefaultmidpunct}
{\mcitedefaultendpunct}{\mcitedefaultseppunct}\relax
\EndOfBibitem
\bibitem[Morari and Lee(1999)Morari, and Lee]{morari1999MPC_review1}
Morari,~M.; Lee,~J.~H. Model predictive control: past, present and future. \emph{Comput. Chem. Eng.} \textbf{1999}, \emph{23}, 667--682\relax
\mciteBstWouldAddEndPuncttrue
\mciteSetBstMidEndSepPunct{\mcitedefaultmidpunct}
{\mcitedefaultendpunct}{\mcitedefaultseppunct}\relax
\EndOfBibitem
\bibitem[Qin and Badgwell(2003)Qin, and Badgwell]{qin2003MPC_review2}
Qin,~S.~J.; Badgwell,~T.~A. A survey of industrial model predictive control technology. \emph{Control Eng. Pract.} \textbf{2003}, \emph{11}, 733--764\relax
\mciteBstWouldAddEndPuncttrue
\mciteSetBstMidEndSepPunct{\mcitedefaultmidpunct}
{\mcitedefaultendpunct}{\mcitedefaultseppunct}\relax
\EndOfBibitem
\bibitem[Mayne(2014)]{mayne2014MPC_review3}
Mayne,~D.~Q. Model predictive control: Recent developments and future promise. \emph{Automatica} \textbf{2014}, \emph{50}, 2967--2986\relax
\mciteBstWouldAddEndPuncttrue
\mciteSetBstMidEndSepPunct{\mcitedefaultmidpunct}
{\mcitedefaultendpunct}{\mcitedefaultseppunct}\relax
\EndOfBibitem
\bibitem[Wu \latin{et~al.}(2019)Wu, Tran, Rincon, and Christofides]{wu2019ML-based_MPC_part1}
Wu,~Z.; Tran,~A.; Rincon,~D.; Christofides,~P.~D. Machine learning-based predictive control of nonlinear processes. Part I: theory. \emph{AIChE J.} \textbf{2019}, \emph{65}, e16729\relax
\mciteBstWouldAddEndPuncttrue
\mciteSetBstMidEndSepPunct{\mcitedefaultmidpunct}
{\mcitedefaultendpunct}{\mcitedefaultseppunct}\relax
\EndOfBibitem
\bibitem[Wu \latin{et~al.}(2019)Wu, Tran, Rincon, and Christofides]{wu2019ML-based_MPC_part2}
Wu,~Z.; Tran,~A.; Rincon,~D.; Christofides,~P.~D. Machine-learning-based predictive control of nonlinear processes. Part II: Computational implementation. \emph{AIChE J.} \textbf{2019}, \emph{65}, e16734\relax
\mciteBstWouldAddEndPuncttrue
\mciteSetBstMidEndSepPunct{\mcitedefaultmidpunct}
{\mcitedefaultendpunct}{\mcitedefaultseppunct}\relax
\EndOfBibitem
\bibitem[Wang \latin{et~al.}(2024)Wang, Wang, Tian, and Wu]{wang2024explicitML-MPC}
Wang,~W.; Wang,~Y.; Tian,~Y.; Wu,~Z. Explicit machine learning-based model predictive control of nonlinear processes via multi-parametric programming. \emph{Comput. Chem. Eng.} \textbf{2024}, \emph{186}, 108689\relax
\mciteBstWouldAddEndPuncttrue
\mciteSetBstMidEndSepPunct{\mcitedefaultmidpunct}
{\mcitedefaultendpunct}{\mcitedefaultseppunct}\relax
\EndOfBibitem
\bibitem[Pappas \latin{et~al.}(2021)Pappas, Kenefake, Burnak, Avraamidou, Ganesh, Katz, Diangelakis, and Pistikopoulos]{pappas2021mpP_review-3}
Pappas,~I.; Kenefake,~D.; Burnak,~B.; Avraamidou,~S.; Ganesh,~H.~S.; Katz,~J.; Diangelakis,~N.~A.; Pistikopoulos,~E.~N. Multiparametric programming in process systems engineering: Recent developments and path forward. \emph{Front. Chem. Eng.} \textbf{2021}, \emph{2}, 620168\relax
\mciteBstWouldAddEndPuncttrue
\mciteSetBstMidEndSepPunct{\mcitedefaultmidpunct}
{\mcitedefaultendpunct}{\mcitedefaultseppunct}\relax
\EndOfBibitem
\bibitem[Burer and Letchford(2012)Burer, and Letchford]{burer2012MINLPs}
Burer,~S.; Letchford,~A.~N. Non-convex mixed-integer nonlinear programming: A survey. \emph{Comput. Oper. Res.} \textbf{2012}, \emph{17}, 97--106\relax
\mciteBstWouldAddEndPuncttrue
\mciteSetBstMidEndSepPunct{\mcitedefaultmidpunct}
{\mcitedefaultendpunct}{\mcitedefaultseppunct}\relax
\EndOfBibitem
\bibitem[Amos \latin{et~al.}(2017)Amos, Xu, and Kolter]{amos2017inputconvex}
Amos,~B.; Xu,~L.; Kolter,~J.~Z. Input convex neural networks. Proceedings of the 34th International Conference on Machine Learning. Sydney, Australia, 2017; pp 146--155\relax
\mciteBstWouldAddEndPuncttrue
\mciteSetBstMidEndSepPunct{\mcitedefaultmidpunct}
{\mcitedefaultendpunct}{\mcitedefaultseppunct}\relax
\EndOfBibitem
\bibitem[Chen \latin{et~al.}(2018)Chen, Shi, and Zhang]{chen2018ICNN-MPC_1}
Chen,~Y.; Shi,~Y.; Zhang,~B. Optimal control via neural networks: A convex approach. \emph{arXiv preprint} \textbf{2018}, https://doi.org/10.48550/arXiv.1805.11835 (accessed 2024-03-19)\relax
\mciteBstWouldAddEndPuncttrue
\mciteSetBstMidEndSepPunct{\mcitedefaultmidpunct}
{\mcitedefaultendpunct}{\mcitedefaultseppunct}\relax
\EndOfBibitem
\bibitem[Yang and Bequette(2021)Yang, and Bequette]{yang2021ICNN-MPC_2}
Yang,~S.; Bequette,~B.~W. Optimization-based control using input convex neural networks. \emph{Comput. Chem. Eng.} \textbf{2021}, \emph{144}, 107143\relax
\mciteBstWouldAddEndPuncttrue
\mciteSetBstMidEndSepPunct{\mcitedefaultmidpunct}
{\mcitedefaultendpunct}{\mcitedefaultseppunct}\relax
\EndOfBibitem
\bibitem[Taqvi \latin{et~al.}(2016)Taqvi, Tufa, and Muhadizir]{taqvi2017aspen_modelling}
Taqvi,~S.~A.; Tufa,~L.~D.; Muhadizir,~S. Optimization and dynamics of distillation column using Aspen Plus. \emph{Procedia Engineering} \textbf{2016}, \emph{148}, 978--984\relax
\mciteBstWouldAddEndPuncttrue
\mciteSetBstMidEndSepPunct{\mcitedefaultmidpunct}
{\mcitedefaultendpunct}{\mcitedefaultseppunct}\relax
\EndOfBibitem
\bibitem[Zhang \latin{et~al.}(2024)Zhang, Chen, Hua, Ding, Lu, and Yang]{zhang2024matlab_aspen}
Zhang,~H.; Chen,~J.; Hua,~C.; Ding,~Z.; Lu,~P.; Yang,~C. Proportional-integral control and ARX-based model predictive control for extractive dividing wall column. \emph{Chem. Eng. Process.} \textbf{2024}, \emph{196}, 109643\relax
\mciteBstWouldAddEndPuncttrue
\mciteSetBstMidEndSepPunct{\mcitedefaultmidpunct}
{\mcitedefaultendpunct}{\mcitedefaultseppunct}\relax
\EndOfBibitem
\bibitem[Wu \latin{et~al.}(2019)Wu, Rincon, and Christofides]{wu2019ML-MPC1}
Wu,~Z.; Rincon,~D.; Christofides,~P.~D. Real-time adaptive machine-learning-based predictive control of nonlinear processes. \emph{Ind. Eng. Chem. Res.} \textbf{2019}, \emph{59}, 2275--2290\relax
\mciteBstWouldAddEndPuncttrue
\mciteSetBstMidEndSepPunct{\mcitedefaultmidpunct}
{\mcitedefaultendpunct}{\mcitedefaultseppunct}\relax
\EndOfBibitem
\bibitem[Wu \latin{et~al.}(2023)Wu, Yion, Dang, and Wu]{wu2023ML-MPC2}
Wu,~G.; Yion,~W. T.~G.; Dang,~K. L. N.~Q.; Wu,~Z. Physics-informed machine learning for MPC: Application to a batch crystallization process. \emph{Chem. Eng. Res. Des.} \textbf{2023}, \emph{192}, 556--569\relax
\mciteBstWouldAddEndPuncttrue
\mciteSetBstMidEndSepPunct{\mcitedefaultmidpunct}
{\mcitedefaultendpunct}{\mcitedefaultseppunct}\relax
\EndOfBibitem
\bibitem[Xiao \latin{et~al.}(2023)Xiao, Hu, and Wu]{xiao2023ML-MPC3}
Xiao,~M.; Hu,~C.; Wu,~Z. Modeling and predictive control of nonlinear processes using transfer learning method. \emph{AIChE J.} \textbf{2023}, \emph{69}, e18076\relax
\mciteBstWouldAddEndPuncttrue
\mciteSetBstMidEndSepPunct{\mcitedefaultmidpunct}
{\mcitedefaultendpunct}{\mcitedefaultseppunct}\relax
\EndOfBibitem
\bibitem[Dua \latin{et~al.}(2008)Dua, Kouramas, Dua, and Pistikopoulos]{dua2008mpP_review-1}
Dua,~P.; Kouramas,~K.; Dua,~V.; Pistikopoulos,~E.~N. {MPC} on a chip—Recent advances on the application of multi-parametric model-based control. \emph{Comput. Chem. Eng.} \textbf{2008}, \emph{32}, 754--765\relax
\mciteBstWouldAddEndPuncttrue
\mciteSetBstMidEndSepPunct{\mcitedefaultmidpunct}
{\mcitedefaultendpunct}{\mcitedefaultseppunct}\relax
\EndOfBibitem
\bibitem[Pistikopoulos \latin{et~al.}(2012)Pistikopoulos, Dominguez, Panos, Kouramas, and Chinchuluun]{pistikopoulos2012mpP_review-2}
Pistikopoulos,~E.~N.; Dominguez,~L.; Panos,~C.; Kouramas,~K.; Chinchuluun,~A. Theoretical and algorithmic advances in multi-parametric programming and control. \emph{Comput. Manag. Sci.} \textbf{2012}, \emph{9}, 183--203\relax
\mciteBstWouldAddEndPuncttrue
\mciteSetBstMidEndSepPunct{\mcitedefaultmidpunct}
{\mcitedefaultendpunct}{\mcitedefaultseppunct}\relax
\EndOfBibitem
\bibitem[Oberdieck \latin{et~al.}(2016)Oberdieck, Diangelakis, Nascu, Papathanasiou, Sun, Avraamidou, and Pistikopoulos]{oberdieck2016mpP}
Oberdieck,~R.; Diangelakis,~N.~A.; Nascu,~I.; Papathanasiou,~M.~M.; Sun,~M.; Avraamidou,~S.; Pistikopoulos,~E.~N. On multi-parametric programming and its applications in process systems engineering. \emph{Chem. Eng. Res. Des.} \textbf{2016}, \emph{116}, 61--82\relax
\mciteBstWouldAddEndPuncttrue
\mciteSetBstMidEndSepPunct{\mcitedefaultmidpunct}
{\mcitedefaultendpunct}{\mcitedefaultseppunct}\relax
\EndOfBibitem
\bibitem[Tian \latin{et~al.}(2021)Tian, Pappas, Burnak, Katz, and Pistikopoulos]{tian2021simultaneous}
Tian,~Y.; Pappas,~I.; Burnak,~B.; Katz,~J.; Pistikopoulos,~E.~N. Simultaneous design \& control of a reactive distillation system--A parametric optimization \& control approach. \emph{Chem. Eng. Sci.} \textbf{2021}, \emph{230}, 116232\relax
\mciteBstWouldAddEndPuncttrue
\mciteSetBstMidEndSepPunct{\mcitedefaultmidpunct}
{\mcitedefaultendpunct}{\mcitedefaultseppunct}\relax
\EndOfBibitem
\bibitem[Fiacco(1976)]{fiacco1976sensitivity}
Fiacco,~A.~V. Sensitivity analysis for nonlinear programming using penalty methods. \emph{Math. Program.} \textbf{1976}, \emph{10}, 287--311\relax
\mciteBstWouldAddEndPuncttrue
\mciteSetBstMidEndSepPunct{\mcitedefaultmidpunct}
{\mcitedefaultendpunct}{\mcitedefaultseppunct}\relax
\EndOfBibitem
\bibitem[Pistikopoulos \latin{et~al.}(2020)Pistikopoulos, Diangelakis, and Oberdieck]{pistikopoulos2020book}
Pistikopoulos,~E.~N.; Diangelakis,~N.~A.; Oberdieck,~R. \emph{Multi-parametric optimization and control}; John Wiley \& Sons: London, 2020\relax
\mciteBstWouldAddEndPuncttrue
\mciteSetBstMidEndSepPunct{\mcitedefaultmidpunct}
{\mcitedefaultendpunct}{\mcitedefaultseppunct}\relax
\EndOfBibitem
\bibitem[Wang and Wu(2023)Wang, and Wu]{wang2023ICLSTM}
Wang,~Y.~D.,~Zihao; Wu,~Z. Real-Time Machine-Learning-Based Optimization Using Input Convex LSTM. \emph{arXiv preprint} \textbf{2023}, https://doi.org/10.48550/arXiv.2311.07202 (accessed 2024-02-07)\relax
\mciteBstWouldAddEndPuncttrue
\mciteSetBstMidEndSepPunct{\mcitedefaultmidpunct}
{\mcitedefaultendpunct}{\mcitedefaultseppunct}\relax
\EndOfBibitem
\bibitem[Andrade \latin{et~al.}(2016)Andrade, Ribas, and Oliveira]{andrade2016oil-convex}
Andrade,~T.; Ribas,~G.; Oliveira,~F. A strategy based on convex relaxation for solving the oil refinery operations planning problem. \emph{Ind. Eng. Chem. Res.} \textbf{2016}, \emph{55}, 144--155\relax
\mciteBstWouldAddEndPuncttrue
\mciteSetBstMidEndSepPunct{\mcitedefaultmidpunct}
{\mcitedefaultendpunct}{\mcitedefaultseppunct}\relax
\EndOfBibitem
\bibitem[Zhao \latin{et~al.}(2022)Zhao, Zheng, Gong, and Wu]{zhao2022machine}
Zhao,~T.; Zheng,~Y.; Gong,~J.; Wu,~Z. Machine learning-based reduced-order modeling and predictive control of nonlinear processes. \emph{Chem. Eng. Res. Des.} \textbf{2022}, \emph{179}, 435--451\relax
\mciteBstWouldAddEndPuncttrue
\mciteSetBstMidEndSepPunct{\mcitedefaultmidpunct}
{\mcitedefaultendpunct}{\mcitedefaultseppunct}\relax
\EndOfBibitem
\bibitem[Zhao \latin{et~al.}(2023)Zhao, Zheng, and Wu]{zhao2023feature}
Zhao,~T.; Zheng,~Y.; Wu,~Z. Feature selection-based machine learning modeling for distributed model predictive control of nonlinear processes. \emph{Comput. Chem. Eng..} \textbf{2023}, \emph{169}, 108074\relax
\mciteBstWouldAddEndPuncttrue
\mciteSetBstMidEndSepPunct{\mcitedefaultmidpunct}
{\mcitedefaultendpunct}{\mcitedefaultseppunct}\relax
\EndOfBibitem
\bibitem[Kenefake and Pistikopoulos(2022)Kenefake, and Pistikopoulos]{kenefake2022PPOPT}
Kenefake,~D.; Pistikopoulos,~E.~N. {PPOPT}-multiparametric solver for explicit {MPC}. Proceedings of the 32nd European Symposium on Computer-Aided Process Engineering. Toulouse, France, 2022; pp 1273--1278\relax
\mciteBstWouldAddEndPuncttrue
\mciteSetBstMidEndSepPunct{\mcitedefaultmidpunct}
{\mcitedefaultendpunct}{\mcitedefaultseppunct}\relax
\EndOfBibitem
\bibitem[Hart \latin{et~al.}(2011)Hart, Watson, and Woodruff]{hart2011pyomo}
Hart,~W.~E.; Watson,~J.-P.; Woodruff,~D.~L. Pyomo: modeling and solving mathematical programs in Python. \emph{MMath. Program. Comput.} \textbf{2011}, \emph{3}, 219--260\relax
\mciteBstWouldAddEndPuncttrue
\mciteSetBstMidEndSepPunct{\mcitedefaultmidpunct}
{\mcitedefaultendpunct}{\mcitedefaultseppunct}\relax
\EndOfBibitem
\bibitem[{Gurobi Optimization, LLC}(2023)]{gurobi}
{Gurobi Optimization, LLC} {Gurobi Optimizer Reference Manual}. \textbf{2023}, https://www.gurobi.com (accessed 2024-05-17)\relax
\mciteBstWouldAddEndPuncttrue
\mciteSetBstMidEndSepPunct{\mcitedefaultmidpunct}
{\mcitedefaultendpunct}{\mcitedefaultseppunct}\relax
\EndOfBibitem
\bibitem[DeVore and Temlyakov(1996)DeVore, and Temlyakov]{devore1996greedy}
DeVore,~R.~A.; Temlyakov,~V.~N. Some remarks on greedy algorithms. \emph{Adv. Comput. Math.} \textbf{1996}, \emph{5}, 173--187\relax
\mciteBstWouldAddEndPuncttrue
\mciteSetBstMidEndSepPunct{\mcitedefaultmidpunct}
{\mcitedefaultendpunct}{\mcitedefaultseppunct}\relax
\EndOfBibitem
\bibitem[Wolsey and Nemhauser(2014)Wolsey, and Nemhauser]{wolsey2014MIP}
Wolsey,~L.~A.; Nemhauser,~G.~L. \emph{Integer and combinatorial optimization}; John Wiley \& Sons: New York City, 2014\relax
\mciteBstWouldAddEndPuncttrue
\mciteSetBstMidEndSepPunct{\mcitedefaultmidpunct}
{\mcitedefaultendpunct}{\mcitedefaultseppunct}\relax
\EndOfBibitem
\bibitem[Alhajeri \latin{et~al.}(2021)Alhajeri, Wu, Rincon, Albalawi, and Christofides]{alhajeri2021RNN-output-feedback}
Alhajeri,~M.~S.; Wu,~Z.; Rincon,~D.; Albalawi,~F.; Christofides,~P.~D. Machine-learning-based state estimation and predictive control of nonlinear processes. \emph{Chem. Eng. Res. Des.} \textbf{2021}, \emph{167}, 268--280\relax
\mciteBstWouldAddEndPuncttrue
\mciteSetBstMidEndSepPunct{\mcitedefaultmidpunct}
{\mcitedefaultendpunct}{\mcitedefaultseppunct}\relax
\EndOfBibitem
\bibitem[Grancharova \latin{et~al.}(2011)Grancharova, Kocijan, and Johansen]{grancharova2011explicit-output-feedback}
Grancharova,~A.; Kocijan,~J.; Johansen,~T.~A. Explicit output-feedback nonlinear predictive control based on black-box models. \emph{Eng. Appl. Artif. Intell.} \textbf{2011}, \emph{24}, 388--397\relax
\mciteBstWouldAddEndPuncttrue
\mciteSetBstMidEndSepPunct{\mcitedefaultmidpunct}
{\mcitedefaultendpunct}{\mcitedefaultseppunct}\relax
\EndOfBibitem
\bibitem[Zheng \latin{et~al.}(2022)Zheng, Zhao, Wang, and Wu]{zheng2022online-learning}
Zheng,~Y.; Zhao,~T.; Wang,~X.; Wu,~Z. Online learning-based predictive control of crystallization processes under batch-to-batch parametric drift. \emph{AIChE J.} \textbf{2022}, \emph{68}, e17815\relax
\mciteBstWouldAddEndPuncttrue
\mciteSetBstMidEndSepPunct{\mcitedefaultmidpunct}
{\mcitedefaultendpunct}{\mcitedefaultseppunct}\relax
\EndOfBibitem
\bibitem[Hu \latin{et~al.}(2023)Hu, Cao, and Wu]{hu2023online}
Hu,~C.; Cao,~Y.; Wu,~Z. Online machine learning modeling and predictive control of nonlinear systems with scheduled mode transitions. \emph{AIChE J.} \textbf{2023}, \emph{69}, e17882\relax
\mciteBstWouldAddEndPuncttrue
\mciteSetBstMidEndSepPunct{\mcitedefaultmidpunct}
{\mcitedefaultendpunct}{\mcitedefaultseppunct}\relax
\EndOfBibitem
\bibitem[Paszke \latin{et~al.}(2019)Paszke, Gross, Massa, Lerer, Bradbury, Chanan, Killeen, Lin, Gimelshein, Antiga, \latin{et~al.} others]{paszke2019pytorch}
Paszke,~A.; Gross,~S.; Massa,~F.; Lerer,~A.; Bradbury,~J.; Chanan,~G.; Killeen,~T.; Lin,~Z.; Gimelshein,~N.; Antiga,~L.; others Pytorch: An imperative style, high-performance deep learning library. Proceedings of the 33rd Conference on Neural Information Processing Systems. Vancouver, Canada, 2019; p~32\relax
\mciteBstWouldAddEndPuncttrue
\mciteSetBstMidEndSepPunct{\mcitedefaultmidpunct}
{\mcitedefaultendpunct}{\mcitedefaultseppunct}\relax
\EndOfBibitem
\bibitem[Kingma and Ba(2014)Kingma, and Ba]{kingma2014adam}
Kingma,~D.~P.; Ba,~J. Adam: A method for stochastic optimization. \emph{arXiv preprint} \textbf{2014}, https://doi.org/10.48550/arXiv.1412.6980 (accessed 2024-04-22)\relax
\mciteBstWouldAddEndPuncttrue
\mciteSetBstMidEndSepPunct{\mcitedefaultmidpunct}
{\mcitedefaultendpunct}{\mcitedefaultseppunct}\relax
\EndOfBibitem
\bibitem[Wu and Christofides(2021)Wu, and Christofides]{wu2021processbook}
Wu,~Z.; Christofides,~P.~D. \emph{Process Operational Safety and Cybersecurity}; Springer: New York City, 2021\relax
\mciteBstWouldAddEndPuncttrue
\mciteSetBstMidEndSepPunct{\mcitedefaultmidpunct}
{\mcitedefaultendpunct}{\mcitedefaultseppunct}\relax
\EndOfBibitem
\bibitem[Alhajeri \latin{et~al.}(2022)Alhajeri, Luo, Wu, Albalawi, and Christofides]{alhajeri2022aspendynamics_para}
Alhajeri,~M.~S.; Luo,~J.; Wu,~Z.; Albalawi,~F.; Christofides,~P.~D. Process structure-based recurrent neural network modeling for predictive control: A comparative study. \emph{Chem. Eng. Res. Des.} \textbf{2022}, \emph{179}, 77--89\relax
\mciteBstWouldAddEndPuncttrue
\mciteSetBstMidEndSepPunct{\mcitedefaultmidpunct}
{\mcitedefaultendpunct}{\mcitedefaultseppunct}\relax
\EndOfBibitem
\bibitem[Yamanee-Nolin \latin{et~al.}(2020)Yamanee-Nolin, Andersson, Nilsson, Max-Hansen, and Pajalic]{yamanee2020aspen-python}
Yamanee-Nolin,~M.; Andersson,~N.; Nilsson,~B.; Max-Hansen,~M.; Pajalic,~O. Trajectory optimization of an oscillating industrial two-stage evaporator utilizing a Python-Aspen Plus Dynamics toolchain. \emph{Chem. Eng. Res. Des.} \textbf{2020}, \emph{155}, 12--17\relax
\mciteBstWouldAddEndPuncttrue
\mciteSetBstMidEndSepPunct{\mcitedefaultmidpunct}
{\mcitedefaultendpunct}{\mcitedefaultseppunct}\relax
\EndOfBibitem
\end{mcitethebibliography}
\end{document}